\newtoks\prt 
\newtheorem{thm}{Theorem}[section]
\newtheorem{lemma}[thm]{Lemma} 
\newtheorem{prop}[thm]{Proposition} 
\newtheorem{example}[thm]{Example}
\newtheorem*{question}{Question} 
\theoremstyle{definition}
\def\eqn#1$$#2$${\begin{equation}\label#1#2\end{equation}}
\def\M{\mathcal M}
\def\ce{\mathbb C}
\def\en{\mathbb N} 
\def\er{\mathbb R} 
\def\ef{\mathbb F}
\def \reg {\partial _{\kern1pt\text{reg}}}
\def\dd{\operatorname{d}}
\def\di{\,\mbox{\rm d}}
\def\dh{\widehat{\operatorname{d}}}
\def\clu#1#2{\operatorname{clust}_{#1^{**}}(#2)}
\newcommand{\ca}[1]{\operatorname{ca}\left(#1\right)}
\newcommand{\cc}{\operatorname{cc}}
\newcommand{\wk}[2][X]{\operatorname{wk}_{#1}\left(#2\right)}
\newcommand{\wscl}[1]{\overline{#1}^{w^*}}
\newcommand{\upleq}{\rotatebox{90}{$\,\leq\ $}}
\let\dd\dist
\begin{document}

\title{Quantification of the reciprocal Dunford-Pettis property}
\author{Ond\v{r}ej F.K. Kalenda and Ji\v{r}\'{\i} Spurn\'y}

\address{Department of Mathematical Analysis \\
Faculty of Mathematics and Physic\\ Charles University\\
Sokolovsk\'{a} 83, 186 \ 75\\Praha 8, Czech Republic}
\email{kalenda@karlin.mff.cuni.cz}
\email{spurny@karlin.mff.cuni.cz}

%\address{Universit\'e d'Orl\'eans\\ 
%BP 6759\\
%F-45067 Orl\'eans Cedex 2\\France}
%\email{hermann.pfitzner@univ-orleans.fr}

%\address{Department of Mathematical Analysis \\
%Faculty of Mathematics and Physic\\ Charles University\\
%Sokolovsk\'{a} 83, 186 \ 75\\Praha 8, Czech Republic}

\subjclass[2010]{46A17; 46A50; 46E20; 47B07}
\keywords{Dunford-Pettis property; reciprocal Dunford-Pettis property; quantitative version; Mackey compactness}

\thanks{Our research was supported in part by the grant
GA\v{C}R P201/12/0290.}

\begin{abstract} 
We prove in particular that Banach spaces of the form $C_0(\Omega)$,
where $\Omega$ is a locally compact space, enjoy a quantitative 
version of the reciprocal Dunford-Pettis property. 
\end{abstract}
\maketitle

%%%%%%%%%%%%%%%%%%%%%%%%%%%%%%%%%%%%%%%%%%%%%%%%%%%%

\section{Introduction}

A Banach space $X$ is said to have the \emph{Dunford-Pettis property} if, for any Banach space $Y$ every 
weakly compact operator $T:X\to Y$ is completely continous.
Further, $X$ is said to have the \emph{reciprocal Dunford-Pettis property} if, for any Banach space $Y$ every  completely continous
operator $T:X\to Y$ is weakly compact.

Let us recall that $T$ is \emph{weakly compact} if the image by $T$ of the unit ball of $X$ is relatively weakly compact in $Y$. Further, $T$ is \emph{completely continous} if it maps weakly convergent sequences to norm convergent ones, or, equivalently, if it maps weakly Cauchy sequence to norm Cauchy (hence norm convergent) ones.

In general, these two classes of operators are incomparable.
For example, the identity on $\ell_2$ is weakly compact (due to reflexivity of $\ell_2$) but not completely continuous. On the other hand, the identity on $\ell_1$ is completely continuous
(by the Schur property) and not weakly compact.

It is obvious that reflexive spaces have the reciprocal Dunford-Pettis property (as any operator with reflexive domain is weakly compact) and that the Schur property implies the Dunford-Pettis property (as any operator defined on a space with the Schur property is completely continuous). Moreover, the space $L^1(\mu)$ has the Dunford-Pettis property for any non-negative $\sigma$-additive measure $\mu$ by (see  \cite[Theorem 1]{gro} or \cite[p. 61--62]{JoLi}). The space $C_0(\Omega)$, where $\Omega$ is a locally compact space, has both the Dunford-Pettis property
and the reciprocal Dunford-Pettis property (see \cite[p. 153, Theorem 4]{gro}).

In the present paper we investigate quantitative versions of the reciprocal Dun\-ford-Pettis property. It is a kind of a continuation of a recent paper \cite{qdpp} where quantification of the Dunford-Pettis property is studied.
It is also related to many  results on quantitative versions of certain theorems and properties. In particular,
quantitative versions of Krein's theorem were studied in \cite{f-krein,Gr-krein,GHM,CMR}, quantitative versions of Eberlein-\v{S}mulyan and Gantmacher theorems were investigated in \cite{AC-meas}, a quantitative version of James' compactness theorem was proved in \cite{CKS,Gr-james}, quantification of weak sequential continuity and of the Schur property was addressed in \cite{wesecom,qschur}.

The main idea behind quantitative versions is an attempt to replace the respective implication by an inequality. So, in case of the reciprocal Dunford-Pettis property we will try to replace the implication 
$$T\mbox{ is completely continuous}\Rightarrow T\mbox{ is weakly compact} $$
by an inequality of the form
\begin{multline*} \mbox{measure of weak non-compactness of }T \\ \leq C\cdot \mbox{measure of non-complete-continuity of }T.\end{multline*}

We will use the same quantities as in \cite{qdpp} and in addition some equivalent ones. 

In \cite{qdpp} it is proved, in particular, that both $L^1(\mu)$ spaces and $C_0(\Omega)$ spaces enjoy the strongest possible version of quantitative Dunford-Pettis property. In the present paper, we show that $C_0(\Omega)$ spaces have also a quantitative version of the reciprocal Dunford-Pettis property.

\section{Preliminaries}

In this section we define the quantities used in the present paper
and recall some known relationships between them. Most of the quantities we investigate are taken from \cite{qdpp} but we will
need a few more.

We will need to measure how far a given operator is from being
weakly compact, completely continuous or Mackey compact. 

Our results are true both for real and complex spaces. In the real case sometimes better constants are obtained. By $\ef$ we will denote $\er$ or $\ce$, depending on whether we consider real or complex spaces.

\subsection{Measuring non-compactness of sets}
In this subsection we define measures on non-compactness, weak non-compactness and Mackey non-compactness of sets. We start by recalling the Hausdorff measure of non-compactness in metric spaces
and one of its equivalents.

Let $(X,\rho)$ be a metric space. If $A,B\subset X$ are two
nonempty sets, their non-symmetrized Hausdorff distance is defined
by
$$\dh(A,B)=\sup\{\dd(x,B):x\in A\}.$$
The Hausdorff measure of non-compactness of a nonempty set $A\subset
X$ is defined by
$$\chi(A)=\inf\left\{\dh(A,F):F\subset X\mbox{ finite}\right\}.$$
Then $\chi(A)=0$ if and only if $A$ is totally bounded. In case $(X,\rho)$ is complete this is equivalent to relative compactness of $A$. We will need the following ``absolute'' equivalent:
$$\chi_0(A)=\inf\left\{\dh(A,F):F\subset A\mbox{ finite}\right\}.$$
The quantity $\chi_0(A)$ depends only on the metric stucture of $A$ itself, not on the space $X$ where it is embedded. It is easy to check that
\begin{equation}
\chi(A)\le\chi_0(A)\le 2\chi(A)
\label{eq:chi-chi0}
\end{equation}
for any nonempty set $A\subset X$.

If $X$ is a Banach space and $A\subset X$ a nonempty bounded set,
we define the following two measures of weak noncompactness of $A$:
$$\begin{aligned}
\omega(A)&=\inf\left\{\dh(A,K): K\subset X\mbox{ weakly compact}\right\},\\
\wk{A}&=\dh\left(\wscl{A},X\right).
\end{aligned}$$
The quantity $\omega(A)$ is the de Blasi measure of weak noncompactness introduced in \cite{deblasi} and later investigated
for example in \cite{tylli-cambridge,AC-meas,qdpp}. The quantity $\wk{A}$ was used (with various notations) for example in \cite{f-krein,f-subswcg,Gr-krein,GHM,Gr-james,AC-meas,AC-jmaa,CMR,CKS}. These two quantities are not equivalent (see \cite{tylli-cambridge,AC-meas,qdpp}), while there
are several other natural quantities equivalent to the second one (see the papers quoted above). In general we have the following inequalities
\begin{equation}
\label{eq:kompaktnost} \wk{A}\le\omega(A)\le \chi(A)
\end{equation}
for any nonempty bounded subset $A\subset X$. These inequalities are easy,
the first one is proved for example in \cite{AC-meas}. The second one is obvious as finite sets are weakly compact.

Although the quantities $\omega(A)$ and $\wk{A}$ are not equivalent in general,
in some spaces they are equal. In particular, by \cite[Proposition 10.2 and Theorem 7.5]{qdpp} we have
\begin{equation}\begin{aligned}
\label{eq:c0L1}X=c_0(\Gamma)\mbox{ or }X=&L^1(\mu)\mbox{ for a non-negative $\sigma$-additive measure }\mu \\ &\Rightarrow \wk{A}=\omega(A)\mbox{ whenever }A\subset X\mbox{ is bounded.}
\end{aligned}
\end{equation}

We continue by measuring Mackey non-compactness. Let $X$ be still a Banach space.
Suppose that $A\subset X^*$ is a nonempty bounded set. Let us recall that the Mackey topology on $X^*$ is the topology of uniform convergence on weakly compact subsets of $X$. Moreover, the Mackey topology is complete, hence relatively compact subsets
coincide with totally bounded ones. So, $A$ is relatively Mackey compact if and only 
if 
$$A|_L=\{x^*|_L:x^*\in A\}$$
is totally bounded in $\ell_\infty(L)$ for each weakly compact set $L\subset X$.
This inspires the following definition:
$$\chi_m(A)=\sup\{\chi_0(A|_L): L\subset B_X\mbox{ weakly compact}\}.$$
This quantity measures Mackey non-compactness in the sense that $\chi_m(A)=0$ if and only if $A$ is relatively Mackey compact.

\subsection{Measuring of non-compactness of operators}
Let $T:X\to Y$ be a bounded operator. Since $T$ is compact (weakly compact)
if and only if $TB_X$ is relatively compact (relatively weakly compact) in $Y$,
it is natural to measure (weak) noncompactness of $T$ by a quantity applied
to $TB_X$. To simplify the notation we set
$$\chi(T)=\chi(T B_X),\chi_0(T)=\chi_0(T B_X),\omega(T)=\omega(T B_X),
\wk[Y]{T}=\wk[Y]{TB_X}.$$
Similarly, if $Y=Z^*$ for a Banach space, we set
$$\chi_m(T)=\chi_m(TB_X).$$

There are quantitative versions of Schauder's and Gantmacher's theorems
on compactness and weak compactness of the dual operators. More precisely,
if $T:X\to Y$ is a bounded linear operator, we have
\begin{gather}\label{eq:q-Schauder}
\frac12\chi(T^*)\le\chi(T)\le 2\chi(T^*),\qquad \frac12\chi_0(T^*)\le\chi_0(T)\le 2\chi_0(T^*),\\
\label{eq:Gant}\frac12\wk[X^*]{T^*}\le\wk[Y]{T}\le2\wk[X^*]{T^*},\\
\label{eq:noneq}\mbox{the quantities $\omega(T)$ and $\omega(T^*)$ are incomparable in general.}
\end{gather}

The first part of \eqref{eq:q-Schauder} follows from \cite{GM},
the second part follows for example from Lemma~\ref{lm-TA} below applied to the identity operator on $X$ and $A=B_X$. The assertion  \eqref{eq:Gant} follows from \cite[Theorem 3.1]{AC-meas}. The last assertion is proved in \cite[Theorem 4]{tylli-cambridge}.

\subsection{Measuring non-complete-continuity}
In this subsection we introduce two quantities which measure how far
an operator is from being completely continuous. The first one is that used in \cite{qdpp} and it is based on the definition of complete continuity given in the introduction.  Let us start by defining a quantity measuring how far 
a given sequence is from being norm-Cauchy.

Let $(x_k)$ be a bounded sequence in a Banach space. Following
\cite{qschur,qdpp} we set
$$\ca{x_k}=\inf_{n\in\en}\sup\{\|x_k-x_l\|: k,l\ge n\}.$$
It is clear that $\ca{x_k}=0$ if and only if $(x_k)$ is norm-Cauchy.

Further, let $T:X\to Y$ be a bounded operator between Banach spaces.
Following \cite{qdpp} we set
$$\cc(T)=\sup\{\ca{Tx_k}:(x_k)\mbox{ is a weakly Cauchy sequence in }B_X\}.$$
It is clear that $T$ is completely continuous if and only if $\cc(T)=0$.

We will need one more equivalent quantity. This is inspired by an equivalent description of completely continuous operators. The operator $T$ is completely
continuous if and only if $T(L)$ is norm-compact for any weakly compact set $L\subset X$. Such operators are sometimes called Dunford-Pettis, so we will use the following
notation.
$$\cc_{DP}(T)=\sup\{\chi_0(TL):L\subset B_X\mbox{ weakly compact} \}.$$

The two quantities are equivalent. More precisely, we have
\begin{equation}
\label{eq:ccdp} \cc_{DP}(T)\le\cc(T)\le 2\cc_{DP}(T)
\end{equation}

Let us provide a proof. Suppose that $\cc_{DP}(T)>c>0$. Fix a weakly compact 
set $L\subset B_X$ with $\chi_0(TL)>c$. It is easy to construct by induction
a sequence $(y_k)$ in $TL$ with $\|y_k-y_l\|>c$ for any $1\le l<k$.
Let $x_k\in L$ be such that $Tx_k=y_k$. By weak compactness of $L$ we can without
loss of generality suppose that $(x_k)$ is weakly convergent and hence weakly Cauchy.
Since $\ca{Tx_k}\ge c$, we get $\cc(T)\ge c$. This completes the proof of the first inequality.

To show the second one, suppose $\cc(T)>c>0$. Let $(x_k)$ be a weakly Cauchy sequence in $B_X$ with
$\ca{Tx_k}>c$. We can find two sequences $(m_k)$ and $(n_k)$ of natural numbers
such that for each $k\in\en$ we have $m_k<n_k<m_{k+1}$ and $\|Tx_{n_k}-Tx_{m_k}\|>c$. Set $y_k=\frac12(x_{n_k}-x_{m_k})$.
Then $(y_k)$ is a weakly null sequence in $B_X$ and hence $L=\{y_k:k\in\en\}\cup\{0\}$ is a weakly compact subset of $B_X$.

We claim that $\chi_0(TL)\ge\frac c2$. Suppose not. Then there is a finite set
$F\subset TL$ and 
%a number $d$ 
with $\dh(TL,F)<\frac c2$. Since $F$ is finite, there is $h\in F$ and  a subsequence $(y_{k_l})$ such that $\|T y_{k_l}-h\|<\frac c2$ for each $l\in\en$.
Since $(T y_{k_l}-h)$ weakly converges to $-h$, we get $\|h\|\le \frac c2$, so
$h=0$. (Any other element of $TL$ has norm strictly greater than $\frac c2$.)

So,  $\|T y_{k_l}\|<\frac c2$ for each $l\in\en$. But this is a contradiction with the choice of the sequence $(y_k)$.

Therefore $\chi_0(TL)\ge\frac c2$ and so $\cc_{DP}(T)\ge\frac c2$.
This completes the proof of the second inequality.

\section{Main results}

Our first main result is the following theorem which says that
spaces $C_0(\Omega)$ enjoy a quantitative version of the reciprocal Dunford-Pettis property. The formulation combines this result with 
a result of \cite{qdpp} on the quantitative Dunford-Pettis property. We thus get that for operators defined on a $C_0(\Omega)$ space
the weak compactness and complete continuity are quantitatively equivalent.

\begin{thm}\label{main-t} Let $X=C_0(\Omega)$, where $\Omega$ is a Hausdorff locally compact space. Let $Y$ be any Banach space and $T:X\to Y$ be a bounded linear operator. Then we have
$$\frac1{4\pi}\wk[Y]{T}\le\cc(T)\le 4\wk[Y]{T}.$$
More precisely, the following inequalities hold:
\begin{equation}
\label{eq:main1} 
\begin{aligned}
\frac1{4\pi}\wk[Y]{T}\le\frac1{2\pi}&\wk[X^*]{T^*}= \frac1{2\pi}\omega(T^*)
\le\cc_{DP}(T)\\&\le \cc(T)\le 2\omega(T^*) = 2\wk[X^*]{T^*}\le 4\wk[Y]{T}.
\end{aligned}
\end{equation}
In case of real-valued functions the constant $\pi$ in the above inequalities can be everywhere replaced by the constant $2$.
\end{thm}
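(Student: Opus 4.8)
The plan is to establish the whole chain \eqref{eq:main1} inequality by inequality. Four of the six links are soft and use only facts recalled in the Preliminaries: the two outer inequalities $\frac1{4\pi}\wk[Y]{T}\le\frac1{2\pi}\wk[X^*]{T^*}$ and $2\wk[X^*]{T^*}\le4\wk[Y]{T}$ are immediate from the quantitative Gantmacher theorem \eqref{eq:Gant} (i.e.\ $\wk[Y]{T}\le2\wk[X^*]{T^*}$ and $\wk[X^*]{T^*}\le2\wk[Y]{T}$); the two equalities $\wk[X^*]{T^*}=\omega(T^*)$ hold because $X^*=M(\Omega)$ is an abstract $L$-space, hence isometric to some $L^1(\mu)$ with $\mu$ $\sigma$-additive, so that \eqref{eq:c0L1} applies to the bounded set $T^*B_{Y^*}$; and the middle link $\cc_{DP}(T)\le\cc(T)$ is the left half of \eqref{eq:ccdp}. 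Thus everything reduces to the two genuinely analytic inequalities
\begin{equation*}
\cc(T)\le 2\omega(T^*)\qquad\text{and}\qquad \tfrac1{2\pi}\omega(T^*)\le\cc_{DP}(T),
\end{equation*}
the second being the actual quantitative reciprocal Dunford--Pettis statement.

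For the first inequality I would argue straight from the definition of $\cc(T)$. Fix a weakly Cauchy sequence $(x_k)$ in $B_X$ and a number $c>\omega(T^*)$, and choose a weakly compact $K\subset M(\Omega)$ with $\dh(T^*B_{Y^*},K)<c$. For each $y^*\in B_{Y^*}$ pick $\mu\in K$ with $\|T^*y^*-\mu\|<c$; since $\|x_k-x_l\|\le2$, writing $\langle Tx_k-Tx_l,y^*\rangle=\langle x_k-x_l,T^*y^*\rangle$ and splitting through $\mu$ yields
\begin{equation*}
\|Tx_k-Tx_l\|\le\sup_{\mu\in K}|\langle x_k-x_l,\mu\rangle|+2c.
\end{equation*}
The point is then that $\sup_{\mu\in K}|\langle x_k-x_l,\mu\rangle|\to0$ as $k,l\to\infty$, i.e.\ a weakly Cauchy sequence converges uniformly on weakly compact subsets of the dual. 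In the present setting this is comparatively soft: weakly compact subsets of $M(\Omega)$ are uniformly integrable (Dunford--Pettis), so an Egorov/Vitali argument upgrades the pointwise convergence $\langle x_k-x_l,\mu\rangle\to0$ to uniform convergence over $K$. Letting $c\downarrow\omega(T^*)$ gives $\ca{Tx_k}\le2\omega(T^*)$ and hence $\cc(T)\le2\omega(T^*)$, the factor $2$ coming solely from $\|x_k-x_l\|\le2$.

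The inequality $\frac1{2\pi}\omega(T^*)\le\cc_{DP}(T)$ is the heart of the theorem and the step I expect to be the main obstacle. Here I would argue contrapositively. Assuming $\cc_{DP}(T)<c$, I would produce, from the failure of weak compactness of $T^*B_{Y^*}$, a sequence of pairwise disjoint \emph{open} sets $(U_n)$ in $\Omega$ and measures $\mu_n=T^*y_n^*\in T^*B_{Y^*}$ along which $T^*B_{Y^*}$ carries non-negligible mass $|\mu_n(U_n)|$. Using the open-ness of the $U_n$ I would then choose, by Urysohn's lemma, continuous functions $x_n$ with $0\le|x_n|\le1$ and $\spt x_n\subset U_n$ for which $\langle x_n,\mu_n\rangle$ is close to $\mu_n(U_n)$. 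Because the supports are pairwise disjoint, for every $\mu\in M(\Omega)$ one has $\sum_n|\langle x_n,\mu\rangle|\le\|\mu\|$, so $(x_n)$ is weakly null and $L=\{x_n:n\in\en\}\cup\{0\}$ is a weakly compact subset of $B_X$. Since $Tx_n\to0$ weakly while $\|Tx_n\|\ge|\langle x_n,\mu_n\rangle|$ stays bounded below, a cluster-at-the-origin estimate gives a lower bound for $\chi_0(TL)$, hence for $\cc_{DP}(T)$, contradicting the assumption.

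The whole difficulty, and the origin of the constant $\pi$ (respectively $2$ in the real case), is concentrated in the extraction of the disjoint open sets carrying controlled mass: this is a quantitative Dieudonn\'e--Grothendieck theorem for $M(\Omega)$, bounding $\omega(T^*B_{Y^*})$ by a constant times the disjoint-sequence defect $\sup\{\limsup_n\sup_{\mu}|\mu(U_n)|\}$ taken over pairwise disjoint open sequences. The sharp constant there traces back to the elementary but sharp comparison between the total variation $|\mu|(E)$ of a scalar measure on a set and the supremum of $|\mu(V)|$ over subsets $V\subseteq E$, which costs a factor $\pi$ for complex scalars and only $2$ for real ones (via the Hahn decomposition); this is exactly where the uniform-integrability defect natural to $\omega=\wk[X^*]{\cdot}$ in the $L^1$-space $M(\Omega)$ must be matched to the evaluation quantities produced by the continuous test functions. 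Securing this quantitative Grothendieck step with the optimal constant is where the real work lies; once it is in place the constant in $\frac1{2\pi}\omega(T^*)\le\cc_{DP}(T)$ (and its real analogue with $2$) falls out, and combining the six links yields \eqref{eq:main1} together with the stated two-sided estimate.
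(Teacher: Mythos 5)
You organize the chain exactly as the paper does for the soft links: the two outer inequalities from the quantitative Gantmacher theorem \eqref{eq:Gant}, the two equalities from \eqref{eq:c0L1} via the identification of $\M(\Omega)$ with an $L^1(\mu)$ space, and $\cc_{DP}(T)\le\cc(T)$ from \eqref{eq:ccdp}. Your Egorov/uniform-integrability argument for $\cc(T)\le 2\omega(T^*)$ is correct and self-contained, where the paper simply invokes \cite[Theorem 5.2]{qdpp}. For the remaining inequality $\frac1{2\pi}\omega(T^*)\le\cc_{DP}(T)$ your architecture genuinely differs from the paper's: instead of factoring through Mackey non-compactness, $\omega(T^*)\le\pi\chi_m(T^*)\le 2\pi\cc_{DP}(T)$ (Theorems~\ref{t:M=w} and~\ref{cc-Mc}), you pass directly from disjoint open sets to disjointly supported Urysohn functions and a cluster-at-the-origin estimate; this scheme is sound, would yield the constant $\frac1{2\pi}$, and bypasses the $\chi_m$ machinery (Lemmas~\ref{lm-ascoli}, \ref{twosets}, \ref{lm-TA} and~\ref{mac-seq}).

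However, there is a genuine gap, and it sits at the heart of the theorem. Your argument needs the following quantitative Dieudonn\'e--Grothendieck statement: if $A=T^*B_{Y^*}$ satisfies $\omega(A)>\pi c$, then there exist pairwise disjoint open sets $U_n\subset\Omega$ and measures $\mu_n\in A$ with $|\mu_n(U_n)|>c$. You assert this and correctly locate the source of the constant, but you do not prove it; this is precisely the paper's main technical work (the two lower inequalities of Proposition~\ref{q-dual}). Three nontrivial ingredients are missing. (i) From $\omega(A)>\pi c$ one must extract a sequence $(\mu_k)$ in $A$ and pairwise disjoint measurable sets $H_k$ with $|\mu_k|(H_k)$ large; the paper does this by reducing to a separable $L^1(\mu)$ via \cite[Theorem 7.5 and Proposition 7.1]{qdpp} (this is where it is essential that $\M(\Omega)$ is an $L^1$ space, since de Blasi's $\omega$ is not equivalent to $\wk[X^*]{\cdot}$ in general), followed by an inductive construction with level sets $E_k=\{|f_k|>\alpha_k\}$ and $H_k=E_k\setminus\bigcup_{n>k}E_n$ exploiting the failure of uniform integrability. (ii) Rudin's lemma \cite[Lemma 6.3]{rudin} converts the total variation $|\mu_k|(H_k)$ into an actual value $|\mu_k(\tilde H_k)|$ at the cost of $\pi$ (or $2$ in the real case); this is the part you did identify. (iii) The sets one obtains are only compact (by inner regularity), and converting a disjoint sequence of compact sets carrying mass into a disjoint sequence of \emph{open} sets carrying almost the same mass for a subsequence of the measures --- which your Urysohn step requires --- is itself a delicate combinatorial induction (Lemma~\ref{compact-open}): one cannot simply take disjoint open neighborhoods of the compacta, because each neighborhood must simultaneously be almost null for infinitely many of the other measures, which forces a pigeonhole argument and passing to a subsequence. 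Without (i) and (iii) your contradiction argument has no disjoint open sets to feed into Urysohn's lemma, so the inequality $\frac1{2\pi}\omega(T^*)\le\cc_{DP}(T)$ --- the new content of the theorem --- remains unproved.
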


This theorem says, in particular, that the quantities $\cc(T)$, $\wk[Y]{T}$, $\wk[X^*]{T^*}$ and $\omega(T^*)$ are equivalent
for any bounded linear operator $T:C_0(\Omega)\to Y$. 

The first inequality, as well as the last one follows from \eqref{eq:Gant}. The two equalities follow from \eqref{eq:c0L1}
as $C_0(\Omega)^*$ is of the form $L^1(\mu)$. The inequality $\cc(T)\le 2\omega(T^*)$ follows from \cite[Theorem 5.2]{qdpp} as
$C_0(\Omega)$ has the Dunford-Pettis property. The inequality
$\cc_{DP}(T)\le\cc(T)$ follows from \eqref{eq:ccdp}.

Finally, the main new result is the inequality 
$\frac1{2\pi}\omega(T^*)\le\cc_{DP}(T)$ which follows from 
Theorems~\ref{cc-Mc} and~\ref{t:M=w} below.

\medskip 

It is natural to ask whether also the quantity $\omega(T)$ is equivalent
to the remaining ones. The answer is positive in case $\Omega$ is scattered. Indeed, then $X^*$ is isometric to $\ell_1(\Omega)$ and hence we can use \cite[Theorem 8.2]{qdpp}.

In general the answer is negative as witnessed by the following example.

\begin{example}\label{main-ex} There is a separable Banach space $Y$ such that for any uncountable separable metrizable locally compact space $\Omega$ there is
a sequence $(T_n)$ of bounded operators $T_n:C_0(\Omega)\to Y$ such that
$$\lim_{n\to\infty}\frac{\wk[Y]{T_n}}{\omega(T_n)}
=\lim_{n\to\infty}\frac{\cc(T_n)}{\omega(T_n)}=0.$$
\end{example}

This example is proved in Section~\ref{sec-exa} below.

\section{Complete continuity and Mackey compactness}

In this section we prove a quantitative version of a particular case
of \cite[Lemma 2]{gro}. The quoted lemma implies that an operator
$T:X\to Y$ is completely continuous if and only if its adjoint $T^*$
is Mackey-compact (i.e., $T^*(B_{Y^*})$ is relatively Mackey compact in $X^*$). A quantitative version is the following theorem.

\begin{thm}\label{cc-Mc}
Let $X$ and $Y$ be Banach spaces and $T:X\to Y$ an operator.
Then
$$\frac12\chi_m(T^*)\le\cc_{DP}(T)\le 2\chi_m(T^*)$$
\end{thm}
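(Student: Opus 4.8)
The plan is to prove both inequalities ``fibrewise'', for one fixed weakly compact set $L\subset B_X$ at a time, and then to pass to the supremum. Since
$$\cc_{DP}(T)=\sup_L\chi_0(TL),\qquad \chi_m(T^*)=\chi_m(T^*B_{Y^*})=\sup_L\chi_0\bigl((T^*B_{Y^*})|_L\bigr),$$
where both suprema run over the weakly compact sets $L\subset B_X$, it suffices to establish, for each such $L$, the two estimates $\chi_0(TL)\le 2\chi_0((T^*B_{Y^*})|_L)$ and $\chi_0((T^*B_{Y^*})|_L)\le 2\chi_0(TL)$. What makes these two estimates ``dual'' to each other is the following pair of identities: for $z,z'\in TL$ one has $\|z-z'\|=\sup_{y^*\in B_{Y^*}}|y^*(z)-y^*(z')|$, while for $y^*,w^*\in B_{Y^*}$ one has $\|(T^*y^*)|_L-(T^*w^*)|_L\|_\infty=\sup_{z\in TL}|y^*(z)-w^*(z)|$. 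Thus on each side the metric is the oscillation of the canonical pairing taken over the other side, and the two inequalities become formally symmetric.

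For the first inequality I would fix $c>\chi_0((T^*B_{Y^*})|_L)$ and choose $y_1^*,\dots,y_n^*\in B_{Y^*}$ whose restrictions form a $c$-dense finite subset of $(T^*B_{Y^*})|_L$; that is, for every $y^*\in B_{Y^*}$ there is $j$ with $\sup_{z\in TL}|y^*(z)-y_j^*(z)|<c$. I then pass to the finite-rank map $S\colon Y\to\ef^n$, $Sz=(y_1^*(z),\dots,y_n^*(z))$, with the maximum norm on $\ef^n$. A threefold triangle inequality, whose outer two terms are each controlled by the chosen $c$-net, yields $\|z-z'\|\le\|Sz-Sz'\|_\infty+2c$ for all $z,z'\in TL$. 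Now $S(TL)$ is a bounded subset of the finite-dimensional space $\ef^n$, hence totally bounded, so for every $\varepsilon>0$ it has a finite $\varepsilon$-net $Sz_1,\dots,Sz_m$ with $z_i\in TL$; by the displayed inequality $\{z_1,\dots,z_m\}$ is then a $(2c+\varepsilon)$-net of $TL$ lying \emph{inside} $TL$. Letting $\varepsilon\to0$ gives $\chi_0(TL)\le 2c$, and hence $\chi_0(TL)\le 2\chi_0((T^*B_{Y^*})|_L)$.

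The second inequality is proved by the symmetric argument: fix $c>\chi_0(TL)$, choose $z_1,\dots,z_n\in TL$ forming a $c$-net of $TL$, and use the finite-rank map $R\colon Y^*\to\ef^n$, $Ry^*=(y^*(z_1),\dots,y^*(z_n))$. Here the crucial point---and the one place where the argument must be run with care in order to end with the constant $2$ rather than $3$---is that the two outer terms are estimated by applying $\|z-z_k\|<c$ to the single norm-one functionals $y^*$ and $w^*$ \emph{separately}, giving $|y^*(z)-y^*(z_k)|<c$ and $|w^*(z)-w^*(z_k)|<c$; one must resist bounding the combined term by $\|y^*-w^*\|\,\|z-z_k\|\le 2c$, which would waste a factor. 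The resulting inequality $\|(T^*y^*)|_L-(T^*w^*)|_L\|_\infty\le\|Ry^*-Rw^*\|_\infty+2c$, together with total boundedness of $R(B_{Y^*})\subset\ef^n$ and the same $\varepsilon\to0$ passage, yields $\chi_0((T^*B_{Y^*})|_L)\le 2\chi_0(TL)$.

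I expect the main obstacle to be precisely the bookkeeping that keeps every constant equal to $2$: a naive triple triangle inequality, combined with a fixed finite net on the $\ef^n$ side, produces the worse constant $3$ (and even $4$ if one is careless about the diameter of $B_{Y^*}$). The two ideas that remove this loss are (i) exploiting that the auxiliary maps $S$ and $R$ take values in a finite-dimensional space, so their images are totally bounded and the auxiliary net parameter $\varepsilon$ may be sent to $0$, leaving only the $2c$ coming from the dual net; and (ii) in the second direction, splitting the middle difference so that each half is tested by a single functional of norm at most one rather than by the difference of two such functionals.
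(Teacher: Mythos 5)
Your proposal is correct and follows essentially the same route as the paper: its Lemmas~\ref{lm-ascoli}--\ref{lm-TA} consist precisely of your two symmetric fibrewise estimates (a finite $c$-net on one side of the pairing, the finite-rank restriction map into $\ef^F$, total boundedness in finite dimensions, and the threefold triangle inequality giving $2c+\varepsilon$), after which the theorem is obtained, exactly as you do, by taking the supremum over weakly compact $L\subset B_X$. The only difference is organizational---you inline what the paper factors through a quantitative Arzel\`a--Ascoli lemma and an abstract duality lemma---and, as a small aside, your cautionary remark in the second inequality is unnecessary, since bounding the outer terms by $\|y^*-w^*\|\,\|z-z_k\|\le 2c$ yields the same constant $2$.
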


The proof of this theorem is done by a refinement of the arguments
in \cite{gro}. The first tool used in \cite{gro} is the  Arzel\`a-Ascoli theorem. We will use its quantitative version for a special case of $1$-Lipschitz functions on a metric space. It is contained in the following lemma.

\begin{lemma}\label{lm-ascoli} Let $(M,\rho)$ be a metric space and $A\subset \ell_\infty(M)$ be a bounded set formed by $1$-Lipschitz functions.
Then $\chi_0(A)\le 2\chi_0(M)$.
\end{lemma}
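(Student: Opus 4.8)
The plan is to establish this as a quantitative Arzel\`a--Ascoli estimate, reducing the approximation problem on all of $M$ to a finite-dimensional one by restricting to a finite net, and then transferring the approximation back to $M$ by means of the Lipschitz condition. If $\chi_0(M)=\infty$ there is nothing to prove, so I would assume $\chi_0(M)<\infty$ and fix an arbitrary $c>\chi_0(M)$. By the definition of $\chi_0$ there is then a finite set $N=\{t_1,\dots,t_n\}\subset M$ with $\dh(M,N)<c$; equivalently, for every $t\in M$ there is some $t_i\in N$ with $\rho(t,t_i)<c$.

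First I would pass to the finite net. Consider the restriction map $\Phi\colon A\to\ell_\infty(N)$ given by $\Phi(f)=f|_N$, which may be identified with a map into $\ef^n$ equipped with the supremum norm. Since $A$ is bounded, $\Phi(A)$ is a bounded subset of the finite-dimensional space $\ef^n$, hence totally bounded, so $\chi_0(\Phi(A))=0$. Fixing $\ep>0$, there is therefore a finite set $A_0\subset A$ such that $\Phi(A_0)$ is an $\ep$-net for $\Phi(A)$; that is, for every $f\in A$ there is $g\in A_0$ with $\max_{1\le i\le n}|f(t_i)-g(t_i)|<\ep$. Choosing the centres of the covering balls from $\Phi(A)$ itself guarantees $A_0\subset A$, which is exactly what the definition of $\chi_0(A)$ requires.

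The heart of the argument is the transfer back to all of $M$. Suppose $f\in A$ and $g\in A_0$ satisfy $|f(t_i)-g(t_i)|<\ep$ for all $i$. For an arbitrary $t\in M$ pick $t_i\in N$ with $\rho(t,t_i)<c$ and estimate, using that both $f$ and $g$ are $1$-Lipschitz,
\[
|f(t)-g(t)|\le|f(t)-f(t_i)|+|f(t_i)-g(t_i)|+|g(t_i)-g(t)|<c+\ep+c=2c+\ep.
\]
Taking the supremum over $t\in M$ gives $\|f-g\|_\infty\le 2c+\ep$, whence $\dh(A,A_0)\le 2c+\ep$ and consequently $\chi_0(A)\le 2c+\ep$. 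Letting $\ep\to0$ and then $c\to\chi_0(M)$ yields $\chi_0(A)\le 2\chi_0(M)$. The same computation applies verbatim in the complex case, since it uses only the triangle inequality and the Lipschitz bound.

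I would expect no serious obstacle here. The only point demanding care is the bookkeeping that forces the approximating set $A_0$ to lie inside $A$ (so as to be admissible for $\chi_0(A)$), which is why the finite cover of the image $\Phi(A)$ must be centred at points of $\Phi(A)$. The factor $2$ in the conclusion is precisely the cost of the two Lipschitz error terms incurred when moving from the net $N$ back to an arbitrary point of $M$.
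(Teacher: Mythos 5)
Your proposal is correct and follows essentially the same route as the paper's own proof: restrict to a finite $c$-net in $M$, use finite-dimensionality to extract a finite $\varepsilon$-net $A_0\subset A$ for the restrictions, and transfer back via the triangle inequality and the $1$-Lipschitz bound to get $\dh(A,A_0)\le 2c+\varepsilon$. The point you flag about choosing the net centres inside $\Phi(A)$ so that $A_0\subset A$ is exactly the same bookkeeping the paper performs.
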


\begin{proof}
Fix an arbitrary $c>\chi_0(M)$ and $\varepsilon>0$. Then there is a finite set $F\subset M$ such that $\dh(M,F)<c$. Let us define the mapping
$\Phi:A\to\ef^F$ by $\Phi(f)=f|_F$ for $f\in A$. Let us equip $\ef^F$ with the $\ell_\infty$ norm. Then $\Phi(A)$ is 
a bounded subset of $\ef^F$, so it is also totally bounded (as $F$ 
is finite). It follows that there is a finite set $B\subset A$ such that $\dh(\Phi(A),\Phi(B))<\varepsilon$.

We will show that $\dh(A,B)\le 2c+ \varepsilon$. To this end take an arbitrary $f\in A$. By the choice of $B$ there is some $g\in B$ with $\|\Phi(f)-\Phi(g)\|< \varepsilon$. Fix an arbitrary $x\in M$.
We can find $x_0\in F$ with $\rho(x_0,x)<c$. Then
$$\begin{aligned}
|f(x)-g(x)|&\le |f(x)-f(x_0)|+|f(x_0)-g(x_0)|+|g(x_0)-g(x)|
\\&\le \rho(x,x_0)+ \|\Phi(f)-\Phi(g)\| + \rho(x_0,x)
<2c+ \varepsilon.
\end{aligned}$$

Hence $\|f-g\|<2c+ \varepsilon$, so $\dd(f,B)<2c+ \varepsilon$. Since $f\in A$ is arbitrary, we get
$\dh(A,B)\le2c+ \varepsilon$, in particular $\chi_0(A)\le 2c+ \varepsilon$. Finally, since $c>\chi_0(M)$ and $\varepsilon>0$ are arbitrary, we get $\chi_0(A)\le 2\chi_0(M)$. 
\end{proof}

The next lemma is a quantitative version of a part of \cite[Lemma 3]{gro}. It is formulated in a very abstract setting. 

\begin{lemma}\label{twosets}
Let $A$ be a nonempty set, $B$ a nonempty bounded subset of $\ell_\infty(A)$. Let $\varphi:A\to\ell_\infty(B)$ be 
defined by
$$\varphi(a)(b)=b(a),\qquad a\in A, b\in B.$$
Then
$$\frac12\chi_0(B)\le\chi_0(\varphi(A))\le 2\chi_0(B).$$
\end{lemma}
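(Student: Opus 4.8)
The plan is to obtain both inequalities from a single tool, the quantitative Arzelà–Ascoli Lemma~\ref{lm-ascoli}, by exploiting the symmetric bilinear evaluation $\la a,b\ra = b(a)$ that underlies the definition of $\varphi$. The decisive observation is that this pairing is \emph{self-dual}: on one side the functions $\varphi(a)$ are $1$-Lipschitz on the metric space $(B,\|\cdot\|)$, and on the other side each $b\in B$ induces a $1$-Lipschitz function on $(\varphi(A),\|\cdot\|)$. Hence Lemma~\ref{lm-ascoli} can be invoked twice, once with $M=B$ and once with $M=\varphi(A)$, and the two applications deliver the upper and lower bounds respectively. No separate inductive construction of approximating finite sets is needed, since that work is already packaged inside Lemma~\ref{lm-ascoli}.

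For the upper bound I would first check that for each fixed $a\in A$ the function $\varphi(a)\colon B\to\ef$ is $1$-Lipschitz with respect to the norm metric on $B$, since $|\varphi(a)(b)-\varphi(a)(b')|=|b(a)-b'(a)|\le\|b-b'\|$. Moreover $\varphi(A)$ is bounded in $\ell_\infty(B)$ because $\|\varphi(a)\|=\sup_{b\in B}|b(a)|\le\sup_{b\in B}\|b\|<\infty$. Thus $\varphi(A)$ is a bounded family of $1$-Lipschitz functions on the metric space $(B,\|\cdot\|)$, and Lemma~\ref{lm-ascoli} applied with this space in the role of $M$ gives at once $\chi_0(\varphi(A))\le 2\chi_0(B)$.

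For the lower bound I would reverse the roles. Each $b\in B$ determines a function $\tilde b$ on $\varphi(A)$ by $\tilde b(\varphi(a))=b(a)$; this is well defined because $\varphi(a)=\varphi(a')$ forces $b(a)=b(a')$, and it is $1$-Lipschitz on $(\varphi(A),\|\cdot\|)$ since $|b(a)-b(a')|\le\sup_{b'\in B}|b'(a)-b'(a')|=\|\varphi(a)-\varphi(a')\|$. The key point is that the assignment $b\mapsto\tilde b$ is an isometry of $B$ onto the bounded set $\tilde B:=\{\tilde b:b\in B\}\subset\ell_\infty(\varphi(A))$, because $\|\tilde b-\tilde b'\|=\sup_{a\in A}|b(a)-b'(a)|=\|b-b'\|$. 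Since $\chi_0$ depends only on the intrinsic metric structure of a set, this isometry yields $\chi_0(\tilde B)=\chi_0(B)$; and as $\tilde B$ is a bounded family of $1$-Lipschitz functions on $(\varphi(A),\|\cdot\|)$, Lemma~\ref{lm-ascoli} with $M=\varphi(A)$ gives $\chi_0(B)=\chi_0(\tilde B)\le 2\chi_0(\varphi(A))$, which is exactly $\tfrac12\chi_0(B)\le\chi_0(\varphi(A))$. The one step I would verify most carefully is this passage through the isometric copy $\tilde B$: one must confirm that $\tilde b$ is well defined on $\varphi(A)$ and then explicitly appeal to the intrinsic character of $\chi_0$ to transport the measure without loss. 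Everything else reduces to a routine comparison of Lipschitz constants.
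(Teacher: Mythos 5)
Your proposal is correct and follows essentially the same route as the paper's own proof: the upper bound via Lemma~\ref{lm-ascoli} applied to the $1$-Lipschitz family $\varphi(A)$ on $(B,\|\cdot\|)$, and the lower bound by transporting $B$ isometrically (your $b\mapsto\tilde b$ is exactly the paper's embedding $\psi:B\to\ell_\infty(\varphi(A))$) and applying Lemma~\ref{lm-ascoli} a second time with $M=\varphi(A)$. All the verifications you flag --- well-definedness of $\tilde b$, the isometry, and the intrinsic character of $\chi_0$ --- appear in the paper's argument in the same way.
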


\begin{proof} It is clear that $\varphi(A)$ is a bounded subset of
$\ell_\infty(B)$. Moreover, it is formed by $1$-Lipschitz functions.
Indeed, let $a\in A$ be arbitrary. Fix $b_1,b_2\in B$. Then
$$|\varphi(a)(b_1)-\varphi(a)(b_2)|=|b_1(a)-b_2(a)|\le\|b_1-b_2\|.$$
So, by Lemma~\ref{lm-ascoli} we have
$$\chi_0(\varphi(A))\le 2\chi_0(B).$$
To show the second inequality, let us define a canonical
embedding $\psi:B\to\ell_\infty(\varphi(A))$ by
$$\psi(b)(\varphi(a))=b(a),\qquad b\in B, a\in A.$$
This is a well-defined mapping. Indeed, if $a_1,a_2\in A$ are such that $\varphi(a_1)=\varphi(a_2)$, then for any $b\in B$
we have
$$b(a_1)=\varphi(a_1)(b)=\varphi(a_2)(b)=b(a_2).$$
Moreover, $\psi$ is an isometry of $B$ onto $\psi(B)$. Indeed, let $b_1,b_2\in B$ be arbitrary. Then
$$\begin{aligned}\|\psi(b_1)-\psi(b_2)\|& = \sup \{|\psi(b_1)(x)-\psi(b_2)(x)|: x\in \varphi(A)\} 
\\&= \sup \{|\psi(b_1)(\varphi(a))-\psi(b_2)(\varphi(a))|: a\in A\} 
\\&= \sup \{|b_1(a)-b_2(a)|: a\in A\}  = \|b_1-b_2\|.\end{aligned}$$
It follows that $\psi(B)$ is isometric to $B$. Moreover, $\psi(B)$ is a bounded subset of $\ell_\infty(\varphi(A))$ made from $1$-Lipschitz functions (the argument is the same as the one used above in the proof of the first inequality). Hence, using Lemma~\ref{lm-ascoli} we get
$$\chi_0(B)=\chi_0(\psi(B))\le 2\chi_0(\varphi(A)).$$
This completes the proof.
\end{proof} 

The next lemma is a quantitative version of \cite[Lemma 2]{gro}
applied to a single set rather than to a family of sets. 

\begin{lemma}\label{lm-TA} Let $T:X\to Y$ be an operator between Banach spaces.
Let $A\subset X$ be a bounded set.  Let $\psi: X^*\to\ell_\infty(A)$ denote the restriction mapping.
Then
$$\frac12\chi_0(TA)\le \chi_0(\psi(T^*B_{Y^*}))\le 2\chi_0(TA).$$
\end{lemma}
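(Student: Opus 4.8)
The plan is to reduce the statement to the abstract duality already established in Lemma~\ref{twosets}, the bridge being the canonical isometric embedding of $Y$ into $\ell_\infty(B_{Y^*})$ together with the observation that $\psi(T^*B_{Y^*})$ and $TA$ are just the two sides of the single bilinear pairing $(a,y^*)\mapsto y^*(Ta)$.

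First I would set up the two realizations of this pairing. On one hand, each $a\in A$ determines the function $g_a\in\ell_\infty(B_{Y^*})$ given by $g_a(y^*)=y^*(Ta)$; the assignment $Ta\mapsto g_a$ is exactly the canonical embedding $j\colon Y\to\ell_\infty(B_{Y^*})$, $j(y)(y^*)=y^*(y)$, restricted to $TA$. Since $\|y\|=\sup_{y^*\in B_{Y^*}}|y^*(y)|$, this $j$ is an isometry, so $B:=\{g_a:a\in A\}=j(TA)$ is a bounded subset of $\ell_\infty(B_{Y^*})$ with $\chi_0(B)=\chi_0(TA)$. On the other hand, for $y^*\in B_{Y^*}$ the element $\psi(T^*y^*)\in\ell_\infty(A)$ is precisely the function $a\mapsto (T^*y^*)(a)=y^*(Ta)$.

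The core step is to apply Lemma~\ref{twosets} with the abstract set taken to be $B_{Y^*}$ and the bounded family taken to be $B=j(TA)\subset\ell_\infty(B_{Y^*})$. The map $\varphi\colon B_{Y^*}\to\ell_\infty(B)$ produced there sends $y^*$ to the function $g_a\mapsto g_a(y^*)=y^*(Ta)$, and the lemma yields
$$\tfrac12\chi_0(TA)=\tfrac12\chi_0(B)\le\chi_0(\varphi(B_{Y^*}))\le 2\chi_0(B)=2\chi_0(TA).$$
It then remains only to identify $\chi_0(\varphi(B_{Y^*}))$ with $\chi_0(\psi(T^*B_{Y^*}))$.

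This identification is the point requiring the most care, and I expect it to be the main (though mild) obstacle, precisely because the map $a\mapsto Ta$ need not be injective. I would exhibit the correspondence $\psi(T^*y^*)\mapsto\varphi(y^*)$ and verify that it is a well-defined surjective isometry between $\psi(T^*B_{Y^*})\subset\ell_\infty(A)$ and $\varphi(B_{Y^*})\subset\ell_\infty(B)$: for any $y_1^*,y_2^*\in B_{Y^*}$ both relevant distances equal $\sup_{a\in A}|(y_1^*-y_2^*)(Ta)|$, using that the supremum over the distinct elements $g_a$ of $B$ coincides with the supremum over all $a\in A$, since $g_{a_1}=g_{a_2}$ forces $Ta_1=Ta_2$ and hence equal corresponding terms. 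Because $\chi_0$ depends only on the metric structure of the set in question, this gives $\chi_0(\psi(T^*B_{Y^*}))=\chi_0(\varphi(B_{Y^*}))$, and combining with the displayed inequalities completes the proof.
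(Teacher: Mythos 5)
Your proposal is correct and takes essentially the same route as the paper's own proof: both apply Lemma~\ref{twosets} with the abstract set $B_{Y^*}$ and the bounded family $\kappa(TA)\subset\ell_\infty(B_{Y^*})$ (where $\kappa$ is the canonical isometric embedding of $Y$), and then transfer the resulting estimate to $\psi(T^*B_{Y^*})$ via the natural surjective isometry matching $\varphi(y^*)$ with $\psi(T^*y^*)$, justified in both cases by the same supremum computation $\sup_{a\in A}|y_1^*(Ta)-y_2^*(Ta)|$. The only difference is cosmetic: you spell out the possible non-injectivity of $a\mapsto Ta$ as the point needing care, which the paper's computation handles implicitly.
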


\begin{proof}
Let us denote by $\kappa$ the canonical embedding of $Y$ into $\ell_\infty(B_{Y^*})$. Let us define an embedding $\varphi:B_{Y^*}\to \ell_\infty(\kappa(TA))$ by 
$$\varphi(y^*)(\kappa(y))=y^*(y),\qquad y\in TA, y^*\in B_{Y^*}.$$
Using Lemma~\ref{twosets} and the fact that $\kappa$ is an isometry,
we obtain
$$\frac12\chi_0(TA)\le \chi_0(\varphi(B_{Y^*}))\le 2\chi_0(TA).$$
Further, $\psi(T^*B_{Y^*})$ is isometric to $\varphi(B_{Y^*})$.
Indeed, the mapping
$\alpha:\varphi(B_{Y^*})\to\psi(T^*B_{Y^*})$
defined by
$$\alpha(\varphi(y^*))=\psi(T^*y^*),\qquad y^*\in B_{Y^*},$$
is an onto isometry. Let $y_1^*,y_2^*\in B_{Y^*}$ be arbitrary.
Then
$$\begin{aligned}\|\psi(T^*y_1^*)-\psi(T^*y_2^*)\|
&=\sup\{|(T^*y_1^*)(a)-(T^*y_2^*)(a)|:a\in A\}
\\&=\sup\{|y_1^*(Ta)-y_2^*(Ta)|:a\in A\}
=\|\varphi(y_1^*)-\varphi(y_2^*)\|.\end{aligned}$$
It follows that $\alpha$ is a well-defined isometry. Moreover,
it is clear that it is surjective.
Hence we get $\chi_0(\psi(T^*B_{Y^*}))=\chi_0(\varphi(B_{Y^*}))$.
So,
$$\frac12\chi_0(TA)\le \chi_0(\psi(T^*B_{Y^*}))\le 2\chi_0(TA).$$
and the proof is completed.
\end{proof}

Now we are ready to proof the theorem.

\begin{proof}[Proof of Theorem~\ref{cc-Mc}]
The inequalities follow from Lemma~\ref{lm-TA} by taking supremum over all weakly compact sets $A\subset B_X$.
\end{proof}

\section{Weak compactness and Mackey compactness in spaces of measures}

Within this section $\Omega$ will denote a locally compact space
and $\M(\Omega)$ will be the space of all finite Radon measures on $\Omega$ equipped with the total variation norm and considered as the dual space to $C_0(\Omega)$. We will consider simultaneuously the real version (i.e., $C_0(\Omega)$ are real-valued functions and $\M(\Omega)$ are signed measures) and complex version (i.e., $C_0(\Omega)$ are complex functions and $\M(\Omega)$ are complex measures) of these spaces.

We will prove a quantitative version of a result of \cite{gro} saying that in $\M(\Omega)$ weakly compact sets coincide with Mackey compact ones. In \cite{gro} this result is hidden in Corollary to Theorem 2 on page 149 and in the first two lines on page 150.
The promised quantitative version is the following theorem. 

\begin{thm}\label{t:M=w}
Let $A\subset \M(\Omega)$ be a bounded set.
Then
$$\frac12\chi_m(A)\le \omega(A)\le \pi\chi_m(A).$$
In case of real measures, the constant $\pi$ can be replaced by $2$.
\end{thm}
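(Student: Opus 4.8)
The plan is to treat the two inequalities separately. The left one, $\frac12\chi_m(A)\le\omega(A)$, I expect to be soft and to follow from the \emph{qualitative} half of the result being quantified, whereas the right one, $\omega(A)\le\pi\chi_m(A)$, is the substantive part and is where the constant $\pi$ (resp.\ $2$) must be manufactured.

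For the left inequality I would fix a weakly compact set $K\subseteq\M(\Omega)$ with $\dh(A,K)$ close to $\omega(A)$ and an arbitrary weakly compact $L\subseteq B_{C_0(\Omega)}$. Since the restriction map $\mu\mapsto\mu|_L$ is $1$-Lipschitz from $(\M(\Omega),\|\cdot\|)$ into $\ell_\infty(L)$, we have $\dh(A|_L,K|_L)\le\dh(A,K)$. Grothendieck's theorem (the qualitative statement we are quantifying) gives that $K|_L$ is norm-totally-bounded, i.e.\ $\chi_0(K|_L)=0$. Combining $\chi\le\chi_0\le2\chi$ with the Lipschitz estimate $\chi(A|_L)\le\chi(K|_L)+\dh(A|_L,K|_L)$ and $\chi(K|_L)=0$ yields $\chi_0(A|_L)\le2\dh(A,K)$; taking the supremum over $L$ and letting $\dh(A,K)\to\omega(A)$ gives $\chi_m(A)\le2\omega(A)$.

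For the right inequality I would fix $c<\omega(A)$ and aim to exhibit a single weakly compact $L\subseteq B_{C_0(\Omega)}$ with $\chi_0(A|_L)\ge\frac c\pi$ (resp.\ $\frac c2$). The first ingredient is a \emph{quantitative Dieudonn\'e--Grothendieck extraction}: as $A$ is not within $\dh$-distance $c$ of any weakly compact set, there should exist $\mu_n\in A$ and pairwise disjoint open sets $G_n\subseteq\Omega$ with $|\mu_n|(G_n)>c$ for all $n$. The second ingredient, which produces the constant, is the elementary phase estimate: for a complex measure $\nu$ there is a Borel $h$ with $|h|\le1$ and $\betr{\int h\,d\nu}\ge\frac1\pi\norm{\nu}$, obtained by averaging $\Re\!\bigl(e^{i\theta}\nu(\cdot)\bigr)$ over $\theta$ and using $\frac1{2\pi}\int_0^{2\pi}\cos^+=\frac1\pi$; in the real case the Hahn decomposition replaces $\frac1\pi$ by $\frac12$. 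Applying this to $\mu_n$ restricted to $G_n$ and approximating $h$ in $L^1(|\mu_n|)$ by a continuous function supported in $G_n$ (Lusin's theorem plus regularity, keeping the sup-norm $\le1$), I obtain $f_n\in C_c(\Omega)$ with $\norm{f_n}\le1$, $\spt f_n\subseteq G_n$ and $|\mu_n(f_n)|>\frac c\pi-\varepsilon$. Disjointness of the $G_n$ makes $(f_n)$ weakly null, so $L=\{f_n:n\in\en\}\cup\{0\}$ is weakly compact.

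To bound $\chi_0(A|_L)$ from below I would take any finite $F=\{\nu_1|_L,\dots,\nu_N|_L\}\subseteq A|_L$: for each fixed $i$ the disjointness gives $\sum_n|\nu_i|(G_n)\le\norm{\nu_i}<\infty$, hence $|\nu_i(f_n)|\le|\nu_i|(G_n)\to0$, so for $n$ large enough $\norm{\mu_n|_L-\nu_i|_L}\ge|\mu_n(f_n)-\nu_i(f_n)|\ge\frac c\pi-2\varepsilon$ simultaneously for all $i\le N$; thus $\dh(A|_L,F)\ge\frac c\pi-2\varepsilon$. Since $F$ was arbitrary this gives $\chi_0(A|_L)\ge\frac c\pi-2\varepsilon$, and letting $\varepsilon\to0$ and $c\to\omega(A)$ yields $\omega(A)\le\pi\chi_m(A)$ (and $\le2\chi_m(A)$ in the real case). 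Note that finiteness of $F$ is exactly what lets me avoid any diagonalization and hence any loss in the constant. The main obstacle is the quantitative Dieudonn\'e--Grothendieck step with the \emph{sharp} constant, namely producing disjoint open sets with $|\mu_n|(G_n)>c$ for $c$ arbitrarily close to $\omega(A)$; this is precisely the reciprocal-type content, and I expect to prove it by an exhaustion argument that peels off mass on disjoint relatively compact pieces while controlling the tight, uniformly integrable remainder by $\omega(A)$.
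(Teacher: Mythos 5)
Your overall architecture is sound and runs largely parallel to the paper's: your proof of the left inequality $\chi_m(A)\le 2\omega(A)$ is essentially the paper's own argument (pass to a weakly compact $K$ almost realizing $\omega(A)$, invoke the qualitative fact that weakly compact subsets of $\M(\Omega)$ are Mackey compact, perturb by $\dh(A,K)$, and pay the factor $2$ via \eqref{eq:chi-chi0}), and your endgame for the right inequality --- the phase estimate with constant $\frac1\pi$ (resp.\ $\frac12$ via Hahn decomposition), Lusin/Urysohn to get $f_n\in C_0(\Omega)$ with $|f_n|\le 1$ supported in $G_n$, weak nullity from disjointness, and the direct finite-set lower bound for $\chi_0(A|_L)$ with $L=\{f_n:n\in\en\}\cup\{0\}$ --- is correct and is a slightly cleaner packaging of what the paper does through Proposition~\ref{q-dual} and Lemma~\ref{mac-seq}, with the same constants.

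The genuine gap is the step you yourself call ``the main obstacle'': producing, from $\omega(A)>c$, measures $\mu_n\in A$ and \emph{pairwise disjoint open} sets $G_n$ with $|\mu_n|(G_n)>c$ (or $c-\varepsilon$). This extraction is not an ingredient of the theorem; it essentially \emph{is} the theorem, and a one-sentence exhaustion plan does not prove it. There are two concrete obstructions. First, $\omega$ is the de Blasi measure, an infimum of $\dh$-distances to weakly compact sets; read contrapositively, your exhaustion must manufacture an actual weakly compact set within $\dh$-distance $c$ of $A$ whenever the extraction fails, with no loss in the constant. Such a comparison of $\omega$ with a testing-type quantity cannot be soft: in a general Banach space it fails, since $\omega$ is not equivalent to $\wk{\cdot}$ (cf.\ \eqref{eq:noneq} and \cite{tylli-cambridge}), so any proof must exploit the $L^1$-structure of $\M(\Omega)$. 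The paper does exactly this: it invokes \eqref{eq:c0L1} and \cite[Theorem 7.5 and Proposition 7.1]{qdpp} to reduce to $L^1(\mu)$ with $\mu=\sum_k 2^{-k}|\mu_k|$ finite, and then runs an absolute-continuity induction (the sets $E_k=\{|f_k|>\alpha_k\}$ and $H_k=E_k\setminus\bigcup_{n>k}E_n$) to obtain disjoint \emph{measurable} sets with $|\mu_k|(H_k)>c-\varepsilon$. Second, passing from disjoint measurable (or, via inner regularity, compact) sets to disjoint \emph{open} sets is not free: infinitely many pairwise disjoint compact sets in a locally compact space need not admit pairwise disjoint open neighborhoods (consider $\{0\}$ together with the singletons $\{1/k\}$, $k\in\en$, in $\er$), and the measures may charge the unavoidable overlaps. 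The paper needs Lemma~\ref{compact-open} --- a pigeonhole argument over $2^nN$ candidate neighborhoods, at the price of a subsequence and an $\varepsilon$ --- precisely to handle this. Until you supply proofs of both of these steps (or obtain the extraction by some other route), the inequality $\omega(A)\le\pi\chi_m(A)$ is not established.
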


The first step to the proof is a quantiative version of a modification of \cite[Theorem 2]{gro}. In the quoted theorem several equivalent conditions for weak compactness of a subset of $\M(\Omega)$ are summarized. We will prove  quantitative versions of some of them and of some others. They are contained in 
the following proposition.

Let us comment this result a bit.
The second quantity is inspired by condition (2) of \cite[Theorem 2]{gro}. The first inequality follows directly from \cite{gro}.
The third quantity is inspired by condition (3) of the quoted theorem. The second inequality is easy and is done by copying the
respective proof from \cite{gro}.

It is also easy to quantify the implication (3)$\Rightarrow$(4)
from \cite{gro}, but we were not able to quantify the last implication saying that the condition (4) implies weak compactness.
Instead, we used the fourth quantity. The proof of the third inequality required a new idea. The last inequality is proved using
technics from \cite{qdpp}.

\begin{prop}\label{q-dual}
Let $A$ be a bounded subset of $\M(\Omega)$. Then
$$\begin{array}{c}
\omega(A)\\
\upleq
\\
\sup\left\{ \limsup\limits_{k\to\infty} \sup\limits_{\mu\in A} \left|\int f_k\di\mu\right| : (f_k)\mbox{ is a weakly null sequence in }B_{C_0(\Omega)}\right\}
\\ \upleq \\
\sup\left\{ \limsup\limits_{k\to\infty} \sup\limits_{\mu\in A} |\mu(U_k)| :\begin{aligned}(U_k)&\mbox{ is a sequence of}\\ &\mbox{ pairwise disjoint}\mbox{ open subsets of }\Omega\end{aligned}\right\}\\
\upleq
\\
\sup\left\{ \limsup\limits_{k\to\infty} \sup\limits_{\mu\in A} |\mu(F_k)| : \begin{aligned}(F_k)&\mbox{ is a sequence of}\\ &\mbox{ pairwise disjoint}\mbox{ compact subsets of }\Omega\end{aligned}\right\}
\\ \upleq \\
\frac1\pi\omega(A).
\end{array}$$
In the real case the constant $\frac1\pi$ can be replaced by $\frac12$.
\end{prop}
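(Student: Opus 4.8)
The displayed chain asserts the decreasing estimates $\omega(A)\ge q_2\ge q_3\ge q_4\ge\frac1\pi\omega(A)$, where I write $q_2,q_3,q_4$ for the three intermediate suprema — over weakly null sequences in $B_{C_0(\Omega)}$, over sequences of pairwise disjoint open sets, and over sequences of pairwise disjoint compact sets. The plan is to prove the four inequalities separately, from top to bottom. For the top one, $q_2\le\omega(A)$, I would fix $c>\omega(A)$ and a weakly compact set $L\subset\M(\Omega)$ with $\dh(A,L)<c$. Given a weakly null $(f_k)$ in $B_{C_0(\Omega)}$ and $\mu\in A$, choosing $\nu\in L$ with $\|\mu-\nu\|<c$ gives $\betr{\int f_k\di\mu}\le\sup_{\nu\in L}\betr{\int f_k\di\nu}+c$. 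Since $C_0(\Omega)$ has the Dunford--Pettis property, a weakly null sequence converges to $0$ uniformly on the weakly compact set $L$, so the first term tends to $0$; letting $k\to\infty$ and $c\downarrow\omega(A)$ yields $q_2\le\omega(A)$. This is the step that follows directly from \cite{gro}.

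For $q_3\le q_2$, given $c<q_3$ I take pairwise disjoint open sets $(U_k)$ with $\limsup_k\sup_{\mu\in A}\betr{\mu(U_k)}>c$, pass to a subsequence $(U_{k_j})$ and pick $\mu_j\in A$ with $\betr{\mu_j(U_{k_j})}>c$. By inner regularity I choose a compact $F_j\subset U_{k_j}$ with $\betr{\mu_j}(U_{k_j}\setminus F_j)<\ep$, and by Urysohn's lemma a function $f_j\in C_0(\Omega)$, $0\le f_j\le1$, equal to $1$ on $F_j$ and supported in $U_{k_j}$; then $\betr{\int f_j\di\mu_j}>c-\ep$. Because the supports lie in the pairwise disjoint sets $U_{k_j}$, the bounded sequence $(f_j)$ is weakly null in $C_0(\Omega)$, hence admissible for $q_2$, so $q_2\ge c-\ep$; letting $\ep\downarrow0$ and $c\uparrow q_3$ gives $q_2\ge q_3$. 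This copies the corresponding argument of \cite{gro}.

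The inequality $q_4\le q_3$ is, I expect, the crux of the proposition and the main obstacle. Given $c<q_4$ there are pairwise disjoint compact sets $(F_k)$ with $\limsup_k\sup_{\mu\in A}\betr{\mu(F_k)}>c$; I extract $\mu_j\in A$ with $\betr{\mu_j(F_{k_j})}>c$ and, by outer regularity, enclose each $F_{k_j}$ in an open set $G_j$ with $\betr{\mu_j(G_j)}>c-\ep$. The trouble is that the $G_j$ need not be pairwise disjoint, and a sequence of pairwise disjoint compact sets in a general locally compact space cannot always be separated by pairwise disjoint open sets. The new idea must therefore be to pass to a further subsequence along which the enclosing sets can be disjointified with negligible loss of mass. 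I would attempt this with a gliding-hump / Rosenthal-type argument applied to $(\mu_j)$, combined with a diagonal choice of the $G_j$ in which, using outer regularity of the finite measure $\sum_{n\le j}\betr{\mu_n}$, the ``collar'' $G_j\setminus F_{k_j}$ is made small for all earlier indices at once; the disjointified open sets should then still carry $\mu_j$-mass near $c$, giving $q_3\ge c$ and hence $q_3\ge q_4$.

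Finally, for $\frac1\pi\omega(A)\le q_4$ I would invoke the $L^1$-techniques of \cite{qdpp}. Since $\M(\Omega)$ is an abstract $L^1$-space, \cite{qdpp} expresses the de Blasi measure $\omega(A)$ through disjoint sets, namely as the supremum over sequences of pairwise disjoint Borel sets $(E_k)$ of $\limsup_k\sup_{\mu\in A}\betr{\mu}(E_k)$. I would then refine each $E_k$ to a compact subset: by inner regularity together with the elementary averaging lemma that on some piece of a measurable partition a complex measure has modulus at least $\frac1\pi$ of its total variation, one obtains pairwise disjoint compact sets $F_k\subset E_k$ with $\betr{\mu(F_k)}\ge\frac1\pi\betr{\mu}(E_k)-\ep$ for the relevant $\mu$. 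This gives $q_4\ge\frac1\pi\omega(A)$, the factor $\frac1\pi$ arising precisely from passing from the total variation $\betr{\mu}(E)$ to a single value $\betr{\mu(F)}$; for real measures the averaging constant is $\frac12$, which accounts for the improved constant claimed in the statement.
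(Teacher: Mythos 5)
Your treatment of the first two inequalities ($q_2\le\omega(A)$ via Grothendieck's uniform convergence of weakly null sequences on weakly compact sets of measures, and $q_3\le q_2$ via disjointly supported Urysohn functions) coincides with the paper's proof, and your plan for the last inequality is also essentially the paper's route: reduce to an $L^1$-space by the techniques of \cite{qdpp}, produce pairwise disjoint measurable sets carrying almost the critical mass, then pass to a single set of large $|\mu(\cdot)|$ by Rudin's lemma (constant $\frac1\pi$, or $\frac12$ in the real case via Hahn decomposition) and finally to compact subsets by inner regularity. (You gloss the fact that $\M(\Omega)$ is an $L^1$-space of an \emph{infinite} measure, so one must first extract a sequence $(\mu_k)$ from $A$ witnessing $\omega(A)>c$ and work inside $L^1\bigl(\sum_k 2^{-k}|\mu_k|\bigr)$, as the paper does using \cite[Theorem 7.5 and Proposition 7.1]{qdpp}; that is a repairable omission.) The genuine gap is the inequality $q_4\le q_3$, precisely the step you yourself flag as the crux: what you offer there is a plan, not a proof, and the mechanism you describe does not work.

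Your proposal controls the collars $G_j\setminus F_{k_j}$ against the measures with indices $n\le j$, i.e.\ those already chosen; but the obstruction runs in the opposite direction: a \emph{later} measure $\mu_n$ may carry all of the mass it puts on $F_{k_n}$ inside an \emph{earlier} open set $G_j$, and neither shrinking collars relative to earlier measures nor passing to subsequences after the $G_j$ are fixed can repair this. Concretely, take $\Omega=[0,1]$, $F_1=\{0\}$, $F_k=\{1/k\}$ and $\mu_1=\delta_0$, $\mu_k=\delta_{1/k}$ for $k\ge2$: every open $G_1\supset F_1$ contains a full tail of the points $1/k$, so once $G_1$ is fixed, disjointification annihilates all but finitely many later measures; note also that Rosenthal's lemma applied to the disjoint compacta themselves gives nothing here, since $|\mu_n|(F_m)=0$ for all $m\ne n$ already. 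The paper's Lemma~\ref{compact-open} resolves this with a pigeonhole idea absent from your sketch, run on \emph{neighborhoods} rather than on the compacta: at stage $n$ one takes $2^nN$ surviving indices $H$ (with $N>\gamma/\ep$, $\gamma=\sup_k\|\mu_k\|$), separates the corresponding finitely many disjoint compacta by open sets $V_h$, $h\in H$, with pairwise disjoint closures, and observes that for each later index $k$ the traces $F_k\cap\overline{V_h}$ are pairwise disjoint, so some $h(k)\in H$ satisfies $|\mu_k|\bigl(F_k\cap\overline{V_{h(k)}}\bigr)<\ep/2^n$; by pigeonhole a single $k_n\in H$ equals $h(k)$ for an infinite set $M_{n+1}$ of later indices, which are the only ones allowed to survive, and $U_n$ is then chosen inside $V_{k_n}$ avoiding $\overline{U_j}$, $j<n$. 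Thus the selection of which compact sets survive is dictated by the later measures through a finite partition-and-pigeonhole argument; without this device (or an equivalent one) your chain of inequalities is broken at its central link.
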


\begin{proof} The first two inequalites easily follow from \cite{gro}. 
Indeed, let $(f_k)$ be any weakly null sequence in $B_{C_0(\Omega)}$. Fix an arbitrary $c>\omega(A)$. Then there
is a weakly compact set $H\subset \M(\Omega)$ such that $A\subset H+c B$, where $B$ denotes the unit ball of $\M(\Omega)$.  By \cite[Theorem 2]{gro}
$$\lim \int f_k\di\mu = 0\qquad \mbox{uniformly for }\mu\in H.$$
Thus
$$\begin{aligned}
\limsup\limits_{k\to\infty} \sup\limits_{\mu\in A} \left|\int f_k\di\mu\right|
&\le\limsup\limits_{k\to\infty} \sup\limits_{\mu\in H+cB} \left|\int f_k\di\mu\right|
\\&\le
\limsup\limits_{k\to\infty}\left( \sup\limits_{\mu\in H} \left|\int f_k\di\mu\right|+c
\sup\limits_{\mu\in B} \left|\int f_k\di\mu\right|\right)
\\&\le \limsup\limits_{k\to\infty}\left( \sup\limits_{\mu\in H} \left|\int f_k\di\mu\right|+c\right)= c.\end{aligned}$$
Since $c>\omega(A)$ is arbitrary, we get
$$\limsup\limits_{k\to\infty} \sup\limits_{\mu\in A} \left|\int f_k\di\mu\right|\le\omega(A),$$
so the first inequality follows.

Let us show the second inequality. If the third quantity is zero, the inequality is obvious. So, suppose that the quantity is strictly positive and fix an arbitrary smaller positive constant $c$.
Then there is a sequence $(U_k)$ of pairwise disjoint open sets in $\Omega$ and a sequence $(\mu_k)$ in $A$ such that $|\mu_k(U_k)|>c$ for each $k\in\en$. For each $k\in\en$ we can find a continuous function $f_k:\Omega\to[0,1]$ supported by a compact subset of $U_k$ such that $\left|\int f_k\di\mu_k\right|>c$. Since $(f_k)$ is a bounded sequence in $C_0(\Omega)$ which pointwise converges to zero, it is weakly null. This completes the proof of the second inequality.

Let us proceed with the third inequality. Obviously, it is enough
to prove the following lemma.

\begin{lemma}\label{compact-open}
Let $(\mu_k)$ be a bounded sequence in $\M(\Omega)$, let $(F_k)$ be a sequence of pairwise disjoint compact subsets of $\Omega$. Let $c>0$ be such that $|\mu_k(F_k)|>c$ for each $k\in\en$. Then for any $\varepsilon>0$
there is a subsequence $(\mu_{k_n})$ and a sequence $(U_n)$ of  pairwise disjoint open subsets such that $|\mu_{k_n}(U_n)|>c- \varepsilon$ for each $n\in\en$:
\end{lemma}

\begin{proof} Let $(\mu_k)$, $(F_k)$ and $c$ satisfy the assumptions. Let $\varepsilon>0$ be arbitrary. Set
$$\gamma=\sup\{\|\mu_k\|:k\in\en\}$$
and find $N\in\en$ with $\frac1N<\frac{\varepsilon}{\gamma}$.
We will construct by induction for each $n\in\en$
a natural number $k_n$, sets of natural numbers $M_n$ 
and an open set $U_n\subset \Omega$ such that the following
conditions are satisfied for each $n\in\en$:

\begin{itemize}
	\item $k_n\in M_n$,
	\item $M_{n+1}\subset M_n\setminus\{k_n\}$, $M_{n+1}$ is infinite,
	\item $|\mu_k|(F_k\cap \overline{U_n})<\frac{\varepsilon}{2^n}$ for each $k\in M_{n+1}$,
	\item $\overline{U_n}\cap\overline{U_j}=\emptyset$ for $j<n$,
	\item $|\mu_{k_n}(U_n)|>c- \varepsilon$.
\end{itemize}

We start by setting $M_1=\en$.

Suppose that $n\in\en$ and that we have already constructed $M_j$ for $j\le n$ and $k_j$, $U_j$ for $j<n$. Since $M_n$ is infinite,
we can fix a subset $H\subset M_n$ of cardinality $2^n N$. We can find  open sets $(V_h)_{h\in H}$ with pairwise disjoint closures such that $F_h\subset V_h$ for each $h\in H$.

For any $k\in M_n\setminus H$ there is some $h(k)\in H$ such that
$|\mu_k|\left(F_k\cap \overline{V_{h(k)}}\right)<\frac{\varepsilon}{2^n}$. 
Fix $k_n\in H$ such that
$$M_{n+1}=\{k\in M_n\setminus H: h(k)=k_{n} \}$$
is infinite.

By the induction hypothesis we have $|\mu_{k_n}|(F_{k_n}\cap\overline{U_j})<\frac{\varepsilon}{2^j}$ for any $j<n$. It follows that $|\mu_{k_n}(F_{k_n}\setminus\bigcup_{j<n} \overline{U_j})|>c- \varepsilon$. It follows that there is a compact set $L\subset F_{k_n}\setminus\bigcup_{j<n} \overline{U_j}$ with $|\mu_{k_n}(L)|>c- \varepsilon$.
Finally, we can find an open set $U_n$ such that
\begin{itemize} 	
	\item $L\subset U_n\subset V_{k_n}$,
	\item $\overline{U_n}\cap\overline{U_j}=\emptyset$ for $j<n$,
	\item $|\mu_{k_n}(U_n)|>c- \varepsilon$.	
\end{itemize}
This completes the induction step and the lemma is proved.
\end{proof}

Now we come back to the proof of Proposition~\ref{q-dual}. 
It remains to prove the last inequality. If $\omega(A)=0$, it is trivial. Suppose that $\omega(A)>0$ and fix an arbitray $c\in(0,\omega(A))$. We will proceed in three steps. 

In the first step we reduce the problem to a statement on
$L^1$ spaces on a finite measure space. Since $\M(\Omega)$ is an $L^1$ space on a $\sigma$-additive non-negative measure (infinite, of course), by \cite[Theorem 7.5]{qdpp} there is a sequence $(\mu_k)$ in $A$ such that $\dd(\clu{\M(\Omega)}{\mu_k})>c$.
Set $\mu=\sum_{k=1}^\infty 2^{-k}|\mu_k|$. Then $\mu$ is a finite measure and $L^1(\mu)$ is canonically isometrically embedded into $\M(\Omega)$ onto a subspace containing the sequence $(\mu_k)$.
Set $\tilde A=\{\mu_k:k\in \en\}$.
By \cite[Proposition 7.1]{qdpp}, the quantity $\omega(\tilde A)$ is the same in $\M(\Omega)$ as in the subspace identified with $L^1(\mu)$. In particular $\omega(\tilde A)>c$ in $L^1(\mu)$.

The second step will be the following lemma.

\begin{lemma} Let $\mu$ be a finite $\sigma$-additive nonnegative measure and $\tilde A\subset L^1(\mu)$ be a bounded subset satisfying $\omega(\tilde A)>c>0$. Then for any $\varepsilon>0$ there is a sequence $(f_k)$ in $\tilde A$ and a sequence $(H_k)$ of pairwise disjoint measurable sets satisfying $\int_{H_k}|f_k|>c- \varepsilon$ for each $k\in\en$.
\end{lemma}

\begin{proof}
We will use the construction from the proof of \cite[Proposition 7.1]{qdpp}. Let $B$ denote the unit ball of $L^\infty(\mu)$. Then $B$ is a weakly compact subset of $L^1(\mu)$ and hence $\dh(\tilde A,\alpha B)\ge\omega(\tilde A)>c$ for each $\alpha>0$.
Set $\gamma=\sup\{\|f\|:f\in\tilde A\}$.
We will construct by induction positive numbers $\alpha_k$ and functions $f_k\in\tilde A$ such that
\begin{itemize}
	\item $\dd(f_k,\alpha_k B)>c$,
	\item $\alpha_{k+1}>\alpha_k$,
	\item $\int_E |f_j|\di\mu<\frac{\varepsilon}{2^k}$ whenever $j\le k$ and $\mu(E)\le\frac\gamma{\alpha_{k+1}}$.
\end{itemize}

Set $\alpha_1=1$. Having $\alpha_k$, we can find $f_k\in\tilde A$ satisfying the first condition. Further, by absolute continuity we can find $\alpha_{k+1}>\alpha_k$ such that the third condition is satisfied. This completes the construction.

Set $E_k=\{t:|f_k(t)|>\alpha_k\}$. Then
$$\int_{E_k} |f_k|\di\mu=\dd(f_k,\alpha_k B)>c.$$
Further,
$\mu(E_k)\le\frac{\|f_k\|}{\alpha_k}\le\frac{\gamma}{\alpha_k}$.
It follows that for any $j<k$ we have
$$\int_{E_k}|f_j|\di\mu<\frac{\varepsilon}{2^{k-1}}.$$

Finally, set $H_k=E_k\setminus\bigcup_{n>k}E_n$. Then $(H_k)$ is a sequence of pairwise disjoint measurable sets and
$$\int_{H_k} |f_k|\di\mu
\ge \int_{E_k} |f_k|\di\mu - \sum_{n>k}\int_{E_n} |f_k|\di\mu
>c- \varepsilon.$$
This completes the proof.
\end{proof}

Now we return to the proof of Proposition~\ref{q-dual}.
Using the first step and the above lemma we obtained, given $\varepsilon>0$, a sequence $(\mu_k)$ in $A$ (a subsequence of the sequence chosen in the first step) and a sequence $(H_k)$ of pairwise disjoint sets which are measurable for each $\mu_k$ such that $|\mu|(H_k)>c-\varepsilon$.

As the third step we will use the following lemma.

\begin{lemma} Let $\mu$ be a finite complex measure. Let $H$ be a measurable set with $|\mu|(H)>d>0$. Then there is a measurable subset $\tilde H\subset H$ with $|\mu(\tilde H)|>\frac d\pi$.
 
If $\mu$ is real-valued, $\tilde H$ can be found to satisfy  $|\mu(\tilde H)|>\frac d2$.
\end{lemma}

\begin{proof} The real-valued case is easy using the Hahn decomposition $\mu=\mu^+-\mu^-$.

Let us prove the general case. Since $|\mu|(H)>c$, there are
pairwise disjoint measurable sets $D_1,\dots,D_p\subset H$ such that
$\sum_{j=1}^p |\mu(D_j)|>c$. By \cite[Lemma 6.3]{rudin} there is a subset $J\subset\{1,\dots,p\}$ such that $\left|\sum_{j\in J} \mu(D_j)\right|>\frac c\pi$. It is enough to take $\tilde H=\bigcup_{j\in J}D_j$.
\end{proof}

Now we are ready to finish the proof. Find $\mu_k$-measurable sets $\tilde H_k\subset H_k$ with $|\mu_k(\tilde H_k)|>\frac {c-\varepsilon}\pi$
and then a compact subset $F_k\subset \tilde H_k$ with 
$|\mu_k(F_k)|>\frac {c-\varepsilon}\pi$.

In case of real measures we can obtain $|\mu_k(F_k)|>\frac {c-\varepsilon}2$. 
This completes the proof of the last inequality.    
\end{proof}

The last lemma of this section is a quantitative version 
of \cite[p. 134, Corollary to Lemma 3]{gro}.

\begin{lemma}\label{mac-seq} Let $X$ be a Banach space and $A\subset X^*$ a bounded set. Then
\begin{multline*}\frac18\chi_m(A)\le \sup\left\{\limsup_{k\to\infty}\sup_{x^*\in A}
|x^*(x_k)| : (x_k)\mbox{ is a weakly null sequence in }B_X\right\}
\\ \le\chi_m(A)\end{multline*}
\end{lemma}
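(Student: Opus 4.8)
The plan is to prove the two inequalities separately; write $q(A)$ for the middle quantity in the statement.

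\emph{The upper estimate $q(A)\le\chi_m(A)$ is the routine half.} Given a weakly null sequence $(x_k)$ in $B_X$, I would set $L=\{x_k:k\in\en\}\cup\{0\}$, which is a weakly compact subset of $B_X$, and show $\limsup_k\sup_{x^*\in A}|x^*(x_k)|\le\chi_0(A|_L)$. Indeed, for any finite $\{x_1^*,\dots,x_m^*\}\subset A$ and $d=\dh(A|_L,\{x_1^*|_L,\dots,x_m^*|_L\})$, each $x^*\in A$ is within $d$ of some $x_i^*$ on $L$, so $|x^*(x_k)|\le d+\max_{j\le m}|x_j^*(x_k)|$; since each $x_j^*(x_k)\to0$ as $k\to\infty$, taking $\limsup_k$ and then the infimum over finite subsets of $A|_L$ gives the bound, and $\chi_0(A|_L)\le\chi_m(A)$ finishes it.

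\emph{The lower estimate is the substantial half.} It suffices to prove that $\chi_0(A|_L)>c$ forces $q(A)\ge c/4$ for every weakly compact $L\subset B_X$. Fixing such an $L$, the inequality $\chi_0(A|_L)>c$ lets me build by induction a sequence $(x_k^*)$ in $A$ and points $z_{k,l}\in L$ (for $k\ne l$) with $|(x_k^*-x_l^*)(z_{k,l})|>c$. The crucial idea is then to \emph{linearise the limit of the functionals}: I pass to the separable closed subspace $X_0=\overline{\span\{z_{k,l}:k\ne l\}}$, on which $B_{X_0^*}$ is weak$^*$ metrizable, and extract a subsequence with $x_k^*|_{X_0}\to\xi$ weak$^*$ for some $\xi\in X_0^*$. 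Writing $K=\overline{L_0}^{\,w}\subset L\cap X_0$ for the weak closure of $L_0=\{z_{k,l}:k\ne l\}$, the functions $g_k=(x_k^*-\xi)|_K$ are continuous on the compact space $(K,w)$, converge to $0$ pointwise, and satisfy $\|g_k-g_l\|_{C(K)}>c$ for $k\ne l$. Since at most one $g_k$ can have norm $\le c/2$ (two such would violate the $c$-separation), I may assume $\|g_k\|>c/2$ and choose $p_k\in K$ with $|g_k(p_k)|>c/2$; the Eberlein--\v{S}mulyan theorem then gives a subsequence, relabelled so that $p_k\to p\in K$ weakly. The decomposition
\[
x_k^*(p_k-p)=\bigl(g_k(p_k)-g_k(p)\bigr)+\xi(p_k-p),
\]
together with $g_k(p)\to0$ (pointwise convergence) and $\xi(p_k-p)\to0$ (weak convergence and $\xi\in X_0^*$), yields $\liminf_k|x_k^*(p_k-p)|\ge c/2$. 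The vectors $w_k=\tfrac12(p_k-p)$ lie in $B_X$ and are weakly null, and $x_k^*\in A$, so $\limsup_k\sup_{x^*\in A}|x^*(w_k)|\ge c/4$; hence $q(A)\ge c/4$. Letting $c\nearrow\chi_0(A|_L)$ and taking the supremum over $L$ gives $\tfrac14\chi_m(A)\le q(A)$, which is even a shade stronger than the asserted constant.

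I expect the genuine obstacle to be exactly the passage from \emph{$c$-separated restrictions to $L$} to a single weakly null sequence detected by $A$: a bounded sequence in $X^*$ need not have a sequentially reachable weak$^*$ cluster point, and a weakly compact $L$ need not be weakly metrizable, so there is a priori no limit functional to subtract. The device that removes this difficulty is the reduction to the separable subspace $X_0$ spanned by the countably many witnessing points $z_{k,l}$, where weak$^*$ sequential compactness is available and produces the functional $\xi$ making the $g_k$ pointwise null; everything after that is triangle-inequality bookkeeping, and the only constant lost is the factor $\tfrac12$ coming from $L-L\subset 2B_X$.
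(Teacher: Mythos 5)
Your proof is correct, and for the hard (left) inequality it follows a genuinely different route from the paper's. The paper transfers the $c$-separation from the functionals to the \emph{points}: it applies the abstract duality-flip Lemma~\ref{twosets} (built on the quantitative Arzel\`a--Ascoli Lemma~\ref{lm-ascoli}) to the canonical map $\varphi:X\to\ell_\infty(A)$, obtaining a $c/2$-separated sequence $(x_k)$ in $L$ as elements of $\ell_\infty(A)$; Eberlein--\v{S}mulyan then yields a weak limit $x$, the separation forces $\|\varphi(x_k)-\varphi(x)\|>c/4$ with at most one exception, and halving into $B_X$ gives the constant $\tfrac18$. You instead keep the separation on the \emph{functional} side and manufacture the missing limit functional by a separable reduction: the countably many witnessing points $z_{k,l}$ span a separable subspace $X_0$ on which $B_{X_0^*}$ is weak$^*$ sequentially compact, producing $\xi$ and the pointwise-null, $c$-separated functions $g_k$ on the weakly compact set $K$. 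The payoff of your route is quantitative: because $g_k(p)\to 0$ and $\xi(p_k-p)\to 0$ vanish in the limit rather than being estimated by a fixed norm distance, you only lose the factor $2$ from the ``at most one exception'' step and the factor $2$ from placing $\tfrac12(p_k-p)$ in $B_X$, so you obtain the constant $\tfrac14$, strictly better than the paper's $\tfrac18$. What the paper's route buys in exchange is economy within the paper itself: Lemma~\ref{twosets} is machinery already developed and used for Theorem~\ref{cc-Mc}, so the paper's proof of Lemma~\ref{mac-seq} is a few lines once that lemma is in place, whereas your argument is self-contained but needs the separable-reduction device and the weak-topology bookkeeping (that the weak topology of $X$ restricted to $X_0$ is the weak topology of $X_0$, and that $K=\overline{L_0}^{w}$ stays inside $L\cap X_0$), all of which you handle correctly. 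Your closing diagnosis of the obstruction --- that a bounded sequence in $X^*$ need not have a sequentially attainable weak$^*$ cluster point, and that $L$ need not be weakly metrizable --- is exactly right, and identifies precisely the gap that either the paper's duality flip or your separable reduction must bridge.
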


\begin{proof}
Let $(x_k)$ be a weakly null sequence in $B_X$. Set 
$L=\{0\}\cup\{x_k:k\in\en\}$. Then $L$ is a weakly compact
subset of $B_X$, so $\chi_0(A|_L)\le\chi_m(A)$. Let $\varepsilon>0$
be arbitrary. Then there is a finite set $F\subset A$ such 
that $\dh(A|_L,F|_L)<\chi_m(A)+\varepsilon$. For each $k\in\en$ we have
$$\sup_{x^*\in A} |x^*(x_k)|\le \sup_{x^*\in F} |x^*(x_k)| + \chi_m(A)+\varepsilon.$$
Since
$$\lim_{k\to\infty}\sup_{x^*\in F} |x^*(x_k)|=0,$$
we get 
$$\limsup_{k\to\infty}\sup_{x^*\in A} |x^*(x_k)|\le \chi_m(A)+\varepsilon.$$
Since $\varepsilon>0$ is arbitrary, this completes the proof of the
second inequality.

Let us prove the first one. If $\chi_m(A)=0$, the inequality is obvious. So, suppose that $\chi_m(A)>c>0$. Fix a weakly compact
set $L\subset B_X$ such that $\chi_0(A|_L)>c$. 
Let $\varphi:X\to\ell_\infty(A)$ be the canonical mapping
defined by $$\varphi(x)(x^*)=x^*(x), \qquad x\in X, x^*\in A.$$
Further, let us define $\varphi_0:L\to \ell_\infty(A|_L)$ by
$$\varphi_0(x)(x^*|_L)= x^*(x) \qquad x\in L, x^*\in A.$$
It is clear that $\varphi_0$ is well defined and that $\varphi_0(L)$ is isometric to $\varphi(L)$.
By Lemma~\ref{twosets} we have
$\chi_0(\varphi_0(L))>\frac c2$, hence also $\chi_0(\varphi(L))>\frac c2$. Therefore,
we can construct by induction a sequence $(x_k)$ in $L$ such that 
$$\|\varphi(x_k)-\varphi(x_l)\|>\frac c2,\qquad 1\le l<k.$$
Since $L$ is weakly compact, we can suppose without loss of generality that the sequence $(x_k)$ weakly converges to some $x\in L$. Then 
$$\|\varphi(x_k)-\varphi(x)\|>\frac c4$$
for all $k\in\en$ with at most one exception. So, suppose without loss of generality that it holds for each $k\in\en$. Hence, if we set
$y_k=\frac12(x_k-x)$, then $(y_k)$ is a weakly null sequence in $B_X$ and
$$\sup_{x^*\in A}|x^*(y_k)|=\|\varphi(y_k)\|>\frac c8.$$
This completes the proof of the first inequality. 
\end{proof}

Finally, we are ready to prove the theorem.

\begin{proof}[Proof of Theorem~\ref{t:M=w}.]
Let $A\subset \M(\Omega)$ be a bounded set. 
It follows from Proposition~\ref{q-dual}
and Lemma~\ref{mac-seq} that
$$\frac18\chi_m(A)\le\omega(A)\le\pi\chi_m(A).$$
The second inequality is the announced one, the first one still
needs to be improved. So, suppose that $\omega(A)<c$. Fix $H\subset \M(\Omega)$ weakly compact such that $\dh(A,H)<c$. Then $H$ is a bounded subset of $\M(\Omega)$ satisfying $\omega(H)=0$, hence also
$\chi_m(H)=0$ (we apply the above inequality to $H$). 

Given $L\subset B_{C_0(\Omega)}$ weakly compact and $\varepsilon>0$, there is a finite set $F\subset H$ such 
that $\dh(H|_L,F|_L)<\varepsilon$. Then clearly
$\dh(A|_L,F|_L)<c+ \varepsilon$, so $\chi(A|_L)\le c+ \varepsilon$.
By \eqref{eq:chi-chi0} we get
$\chi_0(A|_L)\le 2(c+ \varepsilon)$. 

It follows that $\chi_m(A)\le 2c$, so $\chi_m(A)\le 2\omega(A)$. 
This completes the proof of the first inequality.
\end{proof}

\section{Proof of Example~\ref{main-ex}}\label{sec-exa}

Let $\Delta=\{-1,1\}^\en$ be the Cantor space.
Let $\Omega$ be an uncountable separable metrizable locally compact space.
Denote by $K$ its one-point compactification. Then $K$ is an uncountable metrizable compact space, therefore $C(K)$ is isomorphic to $C(\Delta)$ by Milyutin's theorem \cite{mil}
(see, e.g., \cite[Theorem 2.1]{rosenthal}). So, $C_0(\Omega)$, which is a hyperplane in $C(K)$, is isomorphic to
a hyperplane in $C(\Delta)$. Since hyperplanes in $C(\Delta)$ are isomorphic to $C(\Delta)$, we can conclude that $C_0(\Omega)$ is isomorphic to $C(\Delta)$. 

Fix an onto isomorphism $Q:C_0(\Omega)\to C(\Delta)$. Then 
$$\frac1{\|Q^{-1}\|} B_{C(\Delta)}\subset Q(B_{C_0(\Omega)})\subset\|Q\|
 B_{C(\Delta)},$$
 and hence for any bounded opearator $T:C(\Delta)
\to Y$ we have 
$$
\begin{aligned}
\cc(TQ)&\le \|Q\|\cc(T),\\
\wk[Y]{TQ}&=\wk[Y]{TQ(B_{C_0(\Omega)})}\le\wk[Y]{T(\|Q\|B_{C(\Delta)})}
=\|Q\|\wk[Y]{T},\\
\omega(TQ)&=\omega(TQ(B_{C_0(\Omega)}))\ge \omega\left(T\left(\frac1{\|Q^{-1}\|}B_{C(\Delta)}\right)\right)=\frac1{\|Q^{-1}\|}\omega(T).
\end{aligned}$$

It follows that it is enough to restrict ourselves to the case $\Omega=\Delta$.

Let $\mu$ denotes the product probability measure on $\Delta$. I.e., 
$\mu$ is the countable power of the uniform probability measure
$\frac12(\delta_1+\delta_{-1})$ on the two-point set $\{-1,1\}$.
Let us define an equivalent norm $\|\cdot\|_n$ on $C(\Delta)$ by
$$\|f\|_n = \frac1n\|f\|+\int_\Delta |f| \di\mu,\qquad f\in C(\Delta).$$
Set $Y_n=(C(\Delta),\|\cdot\|_n)$ and let 
$$Y =\left(\;\bigoplus_{n\in\en} Y_n\right)_{c_0}$$ 
be the $c_0$-sum of the spaces $Y_n$.

Let $Q_n: C(\Delta)\to Y_n$ be the identity mapping, $I_n:Y_n\to Y$ the canonical inclusion made by completing by zeros and $P_n:Y\to Y_n$ be the canonical projection. Let us define $T_n= I_nQ_n$. Then $T_n$ is an operator from $C(\Delta)$ to $Y$. 
The proof will be completed if we show that
$$\frac1{4\pi}\wk[Y]{T_n}\le\cc(T_n)\le \frac2n,\qquad \omega(T_n)\ge \frac12.$$
The first inequality follows from Theorem~\ref{main-t}. Let us show the second one. Let $(f_k)$ be a weakly Cauchy sequence in $B_{C(\Delta)}$.
Then the sequence $(f_k)$ pointwise converges to a (not necessarily continuous) function $f$. Since the sequence is uniformly bounded,
the Lebesgue dominated convergence theorem shows that
$$\lim_{k\to\infty}\int_\Delta |f_k-f|\di\mu=0.$$
In particular, $(f_k)$ is Cauchy in the norm of $L^1(\mu)$. Thus, 
given $\varepsilon>0$ there is $k_0\in\en$ such that whenever
$k,l\ge k_0$ we have $\|f_k-f_l\|_{L^1(\mu)}<\varepsilon$, hence
$$\|f_k-f_l\|_n = \frac1n \|f_k-f_l\| + \int_\Delta |f_k-f_l|\di\mu
<\frac2n+ \varepsilon.$$
It follows that for each $n\in\en$ we have
$$\ca{T_n f_k}=\ca{Q_n f_k}\le \frac2n+\varepsilon.$$
Since $\varepsilon>0$ is arbitrary and $(f_k)$ is an arbitrary weakly Cauchy sequence in $B_{C(\Delta)}$, we obtain $\cc(T_n)\le \frac2n$.

We finish by proving the third inequality. We will prove it by contradiction. Suppose that $\omega(T_n)<c<\frac12$. Let us fix a weakly compact set $L_0\subset Y$ with $\dh(T_n(B_{C(\Delta)}),L_0)<c$. Since 
$T_n(B_{C(\Delta)})\subset I_n(Y_n)$, we have
$$\dh(Q_n(B_{C(\Delta)}),P_n(L_0))\le\dh(T_n(B_{C(\Delta)}),L_0)<c.$$
Set $L=P_n(L_0)$. Then $L$ is a weakly compact subset of $Y_n$.

For any $k\in\en$ let $\pi_k:\Delta\to \{-1,1\}$ be the projection on the $k$-th coordinate. It is a continuous function from $B_{C(\Delta)}$. 
So, there is $y_k\in L$ such that $\|y_k-Q_n(\pi_k)\|_n<c$.
Since $L$ is weakly compact, there is a subsequence $(y_{k_j})$ weakly 
converging to some $y\in L$. Set $f_{k_j}=Q_n^{-1}(y_{k_j})$.
 Since $Q_n$ is an isomorphism, the sequence
$(f_{k_j})$ is weakly convergent in $C(\Delta)$. So it is uniformly bounded and pointwise convergent, hence by the Lebesgue dominated theorem it
is Cauchy in the $L^1$ norm. Let $0<\varepsilon<1-2c$. Fix $j_0$ such that
for $i,j\ge j_0$ we have
$$\int_\Delta |f_{k_i}-f_{k_j}|\di\mu <\varepsilon.$$
Fix $i>j\ge j_0$. Then 
$$\begin{aligned}1&=\int_\Delta |\pi_{k_i}-\pi_{k_j}|\di\mu
\\&\le \int_\Delta |\pi_{k_i}-f_{k_i}|\di\mu
+\int_\Delta |f_{k_i}-f_{k_j}|\di\mu
+\int_\Delta |f_{k_j}-\pi_{k_j}|\di\mu
\\&<\|Q_n(\pi_{k_i})-y_{k_i}\|+ \varepsilon+\|y_{k_j}-Q_n(\pi_{k_j})\|
<2c+ \varepsilon,\end{aligned}$$
which is a contradiction completing the proof.

\section{Final remarks and open questions}

The first natural question is the following one:

\begin{question}
Are the constants in the inequalities in Theorem~\ref{main-t} optimal?
\end{question}

Another natural problem concerns other spaces with the reciprocal Dunford-Pettis property.

\begin{question} Is there a Banach space which enjoys the reciprocal Dunford-Pettis property but not a quantitative version?
\end{question}

By a quantitative version we mean  the existence of a constant $C$ such that the inequality
$$\wk[Y]{T}\le C\cdot\cc(T)$$
holds for any operator $T:X\to Y$. 

Let us remark that for the Dunford-Pettis property there is a quantitative version which is automatically satisfied, see \cite[Theorem 5.2]{qdpp}. We do not know whether a similar thing
holds for the reciprocal Dunford-Pettis property. Our proofs strongly used the structure of $C_0(\Omega)$ spaces.

Let us explain what seems to be a difference between these two properties.

It follows from \cite[Proposition 1]{gro} that
\begin{equation}
\label{eq:DPP} 
\begin{aligned}X&\mbox{ has the Dunford-Pettis property} \\&\qquad\qquad\Leftrightarrow
\mbox{any weakly compact subset of $X^*$ is Mackey compact.}
\end{aligned}
\end{equation}
Futher, \cite[Proposition 8]{gro} implies that 
\begin{equation}
\label{eq:RDPP} 
\begin{aligned}X&\mbox{ has the reciprocal Dunford-Pettis property} \\&\qquad\qquad\Leftrightarrow
\mbox{any Mackey compact subset of $X^*$ is weakly compact.}
\end{aligned}
\end{equation}

Hence, suppose that $X$ has the Dunford-Pettis property. Then any bounded set $A\subset X^*$ satisfies $\chi_m(A)\le 2\omega(A)$,
see the final part of the proof of Theorem~\ref{t:M=w}. And this yields an automatic quantitative version of the Dunford-Pettis property.

We are not able to proceed similarly for the reciprocal Dunford-Pettis property. If $X$ has the reciprocal Dunford-Pettis
property and $A\subset X^*$ is bounded, we do not know how to control $\omega(A)$ by $\chi_m(A)$. We know that any Mackey compact
is weakly compact, thus, if we define
$$\omega_m(A)=\inf\{\dh(A,H): H\subset X^*\mbox{ Mackey compact}\},$$
we obtain $\omega(A)\le\omega_m(A)$. But it is not clear, whether
$\omega_m(A)$ can be controlled by $\chi_m(A)$. (Conversely, $\chi_m(A)\le 2\omega_m(A)$ by the final part of the proof of Theorem~\ref{t:M=w}.) This inspires the following question:

\begin{question} Is the quantity $\omega_m$ defined above equivalent
to $\chi_m$?
\end{question}

For $X=C_0(\Omega)$ it is the case by Theorem~\ref{t:M=w}. But the proof essentially used the structure of $X$. We do not know the answer for general Banach spaces.

%\bibliography{qdpp}\bibliographystyle{plain}
\def\cprime{$'$} \def\cprime{$'$}

\end{document}